\documentclass[11pt]{article}

\pdfoutput=1

\makeatletter
\let\@fnsymbol\@arabic
\makeatother

\makeatletter
\def\blfootnote{\gdef\@thefnmark{}\@footnotetext}
\makeatother

\title{Characterization of geodesic completeness for landmark space\blfootnote{{\it $2020$ Mathematics Subject Classification.} 53C22, 70H05, 58J65, 53Z50}\blfootnote{{\it Key words and phrases.} landmark space, geodesic completeness, stochastic completeness, Hamilton's equations, two-landmark system}}
\author{Karen Habermann\,\footnote{Department of Statistics, University of Warwick, Coventry, CV4 7AL, United Kingdom.\\ {\it Email address:} {\tt karen.habermann@warwick.ac.uk}} , Stephen C. Preston\,\footnote{Department of Mathematics, CUNY Brooklyn College, New York, NY 11210-2889 and CUNY
Graduate Center, New York, NY 10016, USA. {\it Email address:} {\tt Stephen.Preston@brooklyn.cuny.edu}} , Stefan Sommer\,\footnote{Department of Computer Science, University of Copenhagen, Universitetsparken 5, DK-2100 Copenhagen E, Denmark. {\it Email address:} {\tt sommer@di.ku.dk}}}

\usepackage{amsmath, amssymb, amsthm, stmaryrd, fullpage, graphicx, color,xcolor,hyperref}

\hypersetup{
    pdftitle={Characterization of geodesic completeness for landmark space},
    pdfauthor={Karen Habermann, Stephen C. Preston, Stefan Sommer},
}

\newcommand{\kernl}{\mathcal{K}}

\newcommand{\R}{\mathbb{R}}
\newcommand{\Diff}{\mathrm{Diff}}
\newcommand{\dd}{\,{\mathrm d}}
\newcommand{\db}{{\mathrm d}}

\newcommand{\lm}{\textbf{x}}
\newcommand{\mom}{\textbf{p}}
\newcommand{\ham}{\mathcal{H}}

\newtheorem{theorem}{Theorem}

\newtheorem{lemma}[theorem]{Lemma}
\newtheorem{proposition}[theorem]{Proposition}
\theoremstyle{definition}

\theoremstyle{remark}
\newtheorem{remark}[theorem]{Remark}
\newtheorem{example}[theorem]{Example}

\begin{document}
\maketitle

\begin{abstract}
    We provide a full characterization of geodesic completeness for spaces of configurations of landmarks with smooth Riemannian metrics that satisfy a rotational and translation invariance and which are induced from metrics on subgroups of the diffeomorphism group for the shape domain. These spaces are widely used for applications in shape analysis, for example, for measuring shape changes in medical imaging and morphometrics in biology. For statistics of such data to be well-defined, it is imperative to know if geodesics exist for all times. We extend previously known sufficient conditions for geodesic completeness based on the regularity of the metric to give a full characterization for smooth Riemannian metrics with a rotational and translation invariance by means of an integrability criterion that involves only the behavior of the cometric kernel as landmarks approach collision. We further use the integrability criterion for geodesic completeness and previous work on stochastic completeness to construct a family of Riemannian landmark manifolds that are geodesically complete but stochastically incomplete.
\end{abstract}

\section{Introduction}
In shape analysis, particularly the large deformation diffeomorphic mapping framework (LDDMM), see, for example,~\cite{trouveDiffeomorphismsGroupsPattern1998,younesComputableElasticDistances1998,younesShapesDiffeomorphisms2010}, subgroups of the diffeomorphism group of a shape domain are equipped with right-invariant metrics, typically arising from Sobolev operators. Diffeomorphisms act on a large number of shape spaces, and, due to the right-invariance, such metrics descend to orbit shape spaces. One of the most fundamental of such cases is when shapes are configurations of finite sets of ordered distinct landmarks in $\R^d$, denoted by $\lm=(x_1,\dots,x_n)$ where $x_i\in\R^d$ and $x_i\not= x_j$ for $i,j\in\{1,\dots, n\}$ with $i\not= j$. The action of a suitable diffeomorphism $\phi\in\Diff(\R^d)$ is then by composition from the left, that is, $\phi.\lm=(\phi(x_1),\dots,\phi(x_n))$, and the induced metric gives the landmark configuration space the structure of a finite-dimensional Riemannian manifold, as introduced in~\cite{joshiLandmarkMatchingLarge2000}. These spaces have been the subject of intensive interest in the literature, both mathematically as in~\cite{micheliDifferentialGeometryLandmark2008,MMM} and for applications in, among others, medical imaging and morphometrics in biology, see~\cite{younesEvolutionsEquationsComputational2009}. They are frequently used for statistics of shapes when landmarks represent points of interest on datasets of shapes, extending shape statistics as seen in Kendall's shape space beyond the Euclidean setting. Such statistics are often based on geodesics between point configurations. Therefore, both for theoretical interest and for applications, it is important to know if Riemannian landmark manifolds are geodesically complete.

For $n\geq 2$ and $d\geq 1$ fixed, let $(M,g)$ denote the landmark configuration space consisting of $n$ distinct landmarks in $\R^d$. In particular, the manifold $M$ is given by
\begin{displaymath}
    M=\{\lm=(x_1,\dots,x_n):x_1,\dots,x_n\in\R^d\text{ with } x_i\not=x_j\text{ for }i\not=j\}\subset\R^{nd}.
\end{displaymath}
When the Riemannian metric $g$ is inherited from a right-invariant weak Riemannian metric on a suitable subgroup of the diffeomorphism group $\Diff(\R^d)$, it can be described uniquely by its cometric that takes the form
\begin{displaymath}
    g^{ij}(\lm)=K(x_i,x_j),
\end{displaymath}
for $i,j\in\{1,\dots,n\}$ and a positive-definite symmetric kernel $K\colon\R^d\times\R^d\to\R^{d\times d}$. Throughout the article, we assume that the cometric kernel $K$ is rotationally and translationally invariant so that there exists a continuous scalar function $\kernl\colon [0,\infty)\to\R$ such that
\begin{displaymath}
    K(x_i,x_j)=\kernl(\|x_i-x_j\|) I_d,
\end{displaymath}
where $\|\cdot\|$ is the Euclidean norm on $\R^d$ and $I_d$ is the $d\times d$ identity matrix. In particular, we then have, for all $i,j\in\{1,\dots,n\}$,
\begin{equation}\label{eq:defnginv}
    g^{ij}(\lm)=\kernl(\|x_i-x_j\|) I_d.
\end{equation}
As a consequence of Schoenberg~\cite[Theorem~3]{schoenberg}, the function $\kernl\colon [0,\infty)\to\R$ is guaranteed to be positive and strictly decreasing.

\medskip

Geodesics on the Riemannian manifold $(M,g)$ can be studied by lifting to a suitable subgroup of $\Diff(\R^d)$. As discussed by Bauer, Bruveris and Michor in~\cite{bauerOverviewGeometriesShape2014}, since geodesics of diffeomorphisms are well-defined for all times if $K\in C^1(\R^d\times\R^d,\R^{d\times d})$ and because geodesics on the landmark space arise as projections of certain geodesics of diffeomorphisms, we have the following sufficient condition for geodesic completeness.
\begin{theorem}[Bauer--Bruveris--Michor~\cite{bauerOverviewGeometriesShape2014}]\label{thm:BBM}
    Let $(M,g)$ be the landmark space for any number of landmarks in any $\R^d$. If $K\in C^1(\R^d\times\R^d,\R^{d\times d})$ then $(M,g)$ is geodesically complete. Furthermore, if $K\in C^2(\R^d\times\R^d,\R^{d\times d})$ then $(M,g)$ is metrically complete.
\end{theorem}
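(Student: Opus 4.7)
The plan is to lift the landmark geodesic problem to the group of diffeomorphisms generated by the reproducing kernel Hilbert space (RKHS) $\mathcal{H}_K$ associated with $K$, and to use energy conservation to obtain the a priori control needed for global existence. Writing the Hamiltonian as
\begin{equation*}
    \ham(\lm,\mom) = \tfrac{1}{2}\sum_{i,j=1}^{n} \kernl(\|x_i-x_j\|)\,\langle p_i,p_j\rangle,
\end{equation*}
Hamilton's equations take the form $\dot{x}_i = \sum_j K(x_i,x_j)p_j$ together with the companion equation for $\dot{p}_i$. The natural observation is to introduce the time-dependent vector field
\begin{equation*}
    v(t,x) = \sum_{j=1}^{n} K(x,x_j(t))\,p_j(t) \quad \text{on } \R^d,
\end{equation*}
so that $\dot{x}_i = v(t,x_i)$, i.e., the landmark trajectory is the flow of $v$ applied to the initial configuration.

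Next, I would exploit that $v(t,\cdot)$ is a finite linear combination of kernel sections, which yields the identity $\|v(t,\cdot)\|_{\mathcal{H}_K}^{2} = 2\ham$. Since $\ham$ is conserved along geodesics, the RKHS norm of $v$ is constant in $t$. Continuity and boundedness of $K$ supply the canonical continuous embedding $\mathcal{H}_K\hookrightarrow C_b^0(\R^d,\R^d)$, hence a uniform $t$-independent bound on $\|v(t,\cdot)\|_\infty$, which alone prevents any landmark from escaping to infinity in finite time. Under the $C^1$ hypothesis on $K$, the field $v$ is also $C^1$ in its spatial variable, so its extended flow $\phi_t$ on all of $\R^d$ exists locally; combining this with the global $C^0$ bound extends $\phi_t$ to all $t$. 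Since each $\phi_t$ is injective, $x_i(t)=\phi_t(x_i(0))\neq \phi_t(x_j(0))=x_j(t)$ whenever $i\neq j$, so collisions are ruled out and the trajectory stays in $M$ for all time, giving geodesic completeness.

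For metric completeness under $K\in C^2$, the strategy is to lift to a bona fide Hilbert--Lie group of diffeomorphisms. When $K\in C^2$, the space $\mathcal{H}_K$ embeds into $C_b^1$, so the associated group of $\mathcal{H}_K$-diffeomorphisms forms a Hilbert manifold that is metrically complete under its right-invariant metric. The landmark space is then the image under the smooth Riemannian submersion obtained by evaluating a diffeomorphism at the initial landmark configuration. Because Riemannian submersions are distance non-increasing, Cauchy sequences in $(M,g)$ admit horizontal lifts that are Cauchy in the group, and their limits descend to limits in $M$.

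The principal obstacle I anticipate is making the lifting step precise: identifying the correct subgroup of $\Diff(\R^d)$ (namely, flows of $\mathcal{H}_K$-vector fields), verifying that landmark geodesics are genuinely projections of right-invariant geodesics on this group, and matching the regularity assumed on $K$ with the appropriate RKHS embedding. Once that machinery is in place, the non-escape and non-collision conclusions follow quite directly from, respectively, the uniform $C^0$ bound supplied by energy conservation and the bijectivity of the extended flow $\phi_t$.
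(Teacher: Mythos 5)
The paper does not actually prove this theorem: it quotes it from Bauer--Bruveris--Michor and only sketches the mechanism (landmark geodesics are projections of geodesics on a suitable diffeomorphism group, which exist for all time when $K\in C^1$; the $C^2$ statement then follows from Hopf--Rinow). Your lifting strategy is the right one in outline, and the identities $\dot x_i=v(t,x_i)$ and $\|v(t,\cdot)\|_{\mathcal H_K}^2=2\ham$ together with the embedding $\mathcal H_K\hookrightarrow C^0_b$ correctly rule out escape to Euclidean infinity. But the way you close the non-collision step has a genuine gap.

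Ruling out $x_i(t)=x_j(t)$ for each finite $t<T$ via injectivity of the flow $\phi_t$ is not enough to extend a maximal solution of Hamilton's equations past $T$. The phase space is the open set $T^\star M=M\times\R^{nd}$, and a maximal solution on $[0,T)$ with $T<\infty$ must leave every compact subset of it; this can happen because the separations $\|x_i(t)-x_j(t)\|$ tend to $0$ only in the limit $t\uparrow T$ (while remaining nonzero for every $t<T$), and/or because the momenta $\|p_i(t)\|$ blow up. Injectivity of $\phi_t$ on $[0,T)$ excludes neither, and your argument never controls $\mom(t)$ at all. Indeed this is precisely the breakdown mechanism in this paper's own incomplete example with $\kernl(r)=e^{-r}$: positions stay bounded and distinct for all $t<T$, collide only in the limit, and the momenta diverge (Remark~\ref{rem:momentumblowup}). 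As written, your argument does not use the $C^1$ hypothesis in any quantitative way and would apply verbatim to that example, so it cannot be complete. The missing step is where regularity of $K$ does real work: the constancy of $\|v(t,\cdot)\|_{\mathcal H_K}$ combined with the embedding $\mathcal H_K\hookrightarrow C^1_b$ (which requires the mixed derivative $\partial_1\partial_2 K$, i.e.\ genuinely more than continuity of $\nabla K$) gives a $t$-uniform spatial Lipschitz bound on $v(t,\cdot)$, and then Gronwall applied to $\log\|x_i-x_j\|$ yields a positive lower bound on all separations uniformly on $[0,T)$. Once the configuration is confined to a compact subset of $M$, the Gram matrix $\bigl(\kernl(\|x_i-x_j\|)\bigr)_{i,j}$ is uniformly positive definite, so conservation of $\ham$ bounds $\|\mom(t)\|$, and only then can the solution be continued past $T$. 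Your Riemannian-submersion route to the $C^2$ metric completeness statement is a legitimate, if heavier, alternative to the paper's one-line appeal to Hopf--Rinow, but it presupposes completeness upstairs and so inherits the same issue unless that is established first.
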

The second part of Theorem~\ref{thm:BBM} follows from an application of the Hopf--Rinow theorem, which in the $C^2$ case gives the equivalence between geodesic completeness and metric completeness. While the above approach covers a large class of metrics, geodesic completeness results for $(M,g)$ with metrics corresponding to non-$C^1$ cometric kernels have so far not been established.

\medskip

In this article, we fully characterize geodesic completeness for landmark spaces subject to the assumed rotational and translational invariance in terms of a single integrability criterion for the scalar function $\kernl\colon [0,\infty)\to\R$ close to zero, that is, close to landmarks colliding. Specifically, we prove the following theorem that does not assume $K\in C^1(\R^d\times\R^d,\R^{d\times d})$.
\begin{theorem}\label{thm:main}
    Let $(M,g)$ be the landmark space for any number of landmarks in any $\R^d$. Suppose that $\kernl\colon[0,\infty)\to\R$ is smooth on $(0,\infty)$. Then $(M,g)$ is geodesically complete if and only if, for all $a>0$,
    \begin{displaymath}
        \int_0^a\frac{\db r}{\sqrt{\kernl(0)-\kernl(r)}}=\infty.
    \end{displaymath}
\end{theorem}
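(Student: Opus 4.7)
The geodesic equations are Hamilton's equations for the Hamiltonian $\ham(\lm,\mom)=\frac{1}{2}\sum_{i,j=1}^{n}\kernl(\|x_i-x_j\|)\langle p_i,p_j\rangle$ on $T^{*}M$. Standard ODE theory gives local existence, and since $|\dot x_i|\leq\kernl(0)\sum_k|p_k|$ together with a block decomposition of the cometric shows that only the momentum component along a collapsing pair can diverge, the geodesic fails to be complete if and only if $\rho(t):=\min_{i\neq j}\|x_i(t)-x_j(t)\|$ reaches $0$ in finite time. The plan is therefore to show that the integrability condition on $\kernl$ at $0$ is precisely what rules this out.

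\paragraph{Sufficient direction.} Fix a pair $(i,j)$, set $r=\|x_i-x_j\|$, $r_{kl}=\|x_k-x_l\|$, $q=p_i-p_j$ and $b=\sum_{k\neq i,j}(\kernl(r_{ik})-\kernl(r_{jk}))p_k$. Grouping terms in Hamilton's equation for $\dot x_i-\dot x_j$ factorises the velocity difference as $(\kernl(0)-\kernl(r))(q+b/(\kernl(0)-\kernl(r)))$, so
\[
\dot r=(\kernl(0)-\kernl(r))\Big\langle\tfrac{x_i-x_j}{r},\,q+\tfrac{b}{\kernl(0)-\kernl(r)}\Big\rangle.
\]
Completing the square in $q$ inside $2\ham$ produces the identity $2\ham=\tfrac{1}{2}(\kernl(0)-\kernl(r))\big|q+\tfrac{b}{\kernl(0)-\kernl(r)}\big|^{2}+\text{(Schur complement in }P,\{p_k\}_{k\neq i,j}\text{)}$, and since the cometric is positive-definite at non-colliding configurations the Schur complement is non-negative. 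This yields the energy bound $|q+b/(\kernl(0)-\kernl(r))|^{2}\leq 4\ham/(\kernl(0)-\kernl(r))$, whence the clean estimate
\[
|\dot r|\leq 2\sqrt{\ham}\,\sqrt{\kernl(0)-\kernl(r)},
\]
holding for every pair $(i,j)$ and uniformly in $t$ since $\ham$ is conserved. Applied to the pair realising $\rho(t)$ and handled piecewise at the at-most-countably-many switching times, this gives $\int_{\rho(t)}^{\rho(0)}\db s/\sqrt{\kernl(0)-\kernl(s)}\leq 2\sqrt{\ham}\,t$; the divergence of the integral at $0$ forbids $\rho$ from reaching $0$ in finite time, so the geodesic is complete.

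\paragraph{Necessary direction.} Assuming $\int_0^{a}\db r/\sqrt{\kernl(0)-\kernl(r)}<\infty$, I build an explicit geodesic that collides in finite time. For $n=2$ and $d=1$ with symmetric data $x_2(0)=-x_1(0)=r_0/2$ and $p_1(0)=-p_2(0)>0$, conservation of the total momentum and of energy reduce Hamilton's equations to the scalar ODE $\dot r=-2\sqrt{\ham}\sqrt{\kernl(0)-\kernl(r)}$, whose solution reaches $r=0$ at the finite time $T=(2\sqrt{\ham})^{-1}\int_0^{r_0}\db r/\sqrt{\kernl(0)-\kernl(r)}$. For general $n$ and $d$ I place $x_3,\dots,x_n$ far from the colliding pair, with zero initial momenta. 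Since $\dot p_k=-\sum_{m}\kernl'(r_{km})\,(x_k-x_m)/r_{km}\,\langle p_k,p_m\rangle$ vanishes whenever $p_k=0$, the condition $p_k\equiv 0$ is preserved; the pair $(x_1,x_2,p_1,p_2)$ thus decouples and evolves exactly as in the two-landmark case, colliding at time $T$, while $|\dot x_k|\leq L_0\, r(t)\,|p_1(t)|$ (with $L_0$ a Lipschitz constant of $\kernl$ on distances comparable to the placement scale) combined with $|p_1|\leq\sqrt{\ham/(\kernl(0)-\kernl(r))}$ controls the drift of the remaining landmarks, via the change of variables $\db t=-\db r/\dot r$, and keeps them well separated throughout $[0,T]$.

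\paragraph{Main obstacle.} The heart of the argument is the clean bound on $|\dot r|$ in the sufficient direction. It rests on the algebraic coincidence that $\dot r$ and the $q$-quadratic piece of $2\ham$ depend on the momenta of the other landmarks only through the single shifted combination $q+b/(\kernl(0)-\kernl(r))$, an instance of the cometric block decomposition, together with the non-negativity of the corresponding Schur complement. Without this coincidence one would be left with a mixed bound $|\dot r|\leq C_1\sqrt{\kernl(0)-\kernl(r)}+C_2 r$, whose control under the integrability hypothesis requires an additional analysis lemma since the $r$-term need not be dominated by $\sqrt{\kernl(0)-\kernl(r)}$. A secondary subtlety is ensuring the estimate remains valid as the pair realising $\rho(t)$ changes, which is handled by working piecewise and using that the constant $2\sqrt{\ham}$ does not depend on the configuration.
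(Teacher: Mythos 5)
Your proof follows the paper's strategy closely in both directions. The incompleteness half is the same symmetric shooting construction: spectator momenta $p_k\equiv 0$ are preserved, the colliding pair reduces to the separable ODE $\dot r=-2\sqrt{\ham}\sqrt{\kernl(0)-\kernl(r)}$ by momentum and energy conservation, and the collision time is the integral in question. Your key estimate $|\dot r|\le 2\sqrt{\ham}\sqrt{\kernl(0)-\kernl(r)}$ in the completeness half is exactly the paper's identity $g^{-1}(\db f_{ij},\db f_{ij})=2(\kernl(0)-\kernl(f_{ij}))$ combined with Cauchy--Schwarz, only derived by completing the square in $q=p_i-p_j$ inside the Hamiltonian rather than by pairing $\db f_{ij}$ against the cometric; the two computations are equivalent. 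The packaging differs slightly: you argue directly on maximal solutions of Hamilton's equations via an escape argument, whereas the paper bounds the length of arbitrary curves tending to the collision set or to infinity and then invokes Hopf--Rinow together with a cited lemma; your route avoids those external inputs, the paper's treats all curves at once. Your extra control of the spectator landmarks in the necessary direction is valid (the integral $\int_0^a r\,(\kernl(0)-\kernl(r))^{-1}\db r$ is indeed finite under~\eqref{eq:kernlintegrabl}, since monotonicity of $\kernl$ gives $r\le\sqrt{\kernl(0)-\kernl(r)}\,\int_0^a(\kernl(0)-\kernl(s))^{-1/2}\db s$), but it is unnecessary: if some other pair collided or anything else broke down before time $T$, the geodesic would still fail to extend to all times, which is all incompleteness requires.

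There is one genuine gap, in the sufficient direction: you never rule out that a geodesic escapes to Euclidean infinity in finite time, or that momenta blow up without any pair colliding. Your Plan asserts that ``only the momentum component along a collapsing pair can diverge'' via ``a block decomposition of the cometric,'' but this is not proved, and the bound $\|\dot x_i\|\le\kernl(0)\sum_k\|p_k\|$ gives nothing without an a priori momentum bound --- the smallest eigenvalue of the cometric on the non-compact set where all pairwise distances exceed $\delta$ is not obviously bounded below. The fix uses the same Cauchy--Schwarz mechanism you already employ: for a fixed $i$ and unit vector $e$, the covector corresponding to $\mom\mapsto\langle e,\dot x_i\rangle$ has cometric norm squared equal to $\kernl(0)$, so $\|\dot x_i\|\le\sqrt{2\kernl(0)\ham}$ uniformly in $t$, and no position can reach infinity in finite time; this is precisely the paper's Lemma~\ref{lem:norunningaway}, and the paper explicitly flags that this case must be handled separately. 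A second, cosmetic issue: the set of times where the minimizing pair switches need not be countable, but you can bypass $\rho$ entirely by fixing a single pair $(i,j)$ with $\liminf_{t\uparrow T}\|x_i(t)-x_j(t)\|=0$ (there are only finitely many pairs) and integrating your bound for that pair alone, which is what the paper does.
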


We prove Theorem~\ref{thm:main} in two parts by establishing the following two propositions.
\begin{proposition}[Geodesic incompleteness]\label{propn:geoincomplete}
    Let $(M,g)$ be the landmark space for any number of landmarks in any $\R^d$. Suppose that $\kernl\colon[0,\infty)\to\R$ is smooth on $(0,\infty)$ and satisfies, for some $a>0$,
    \begin{equation}\label{eq:kernlintegrabl}
        \int_0^a\frac{\db r}{\sqrt{\kernl(0)-\kernl(r)}}<\infty.
    \end{equation}
   Then $(M,g)$ is geodesically incomplete.
\end{proposition}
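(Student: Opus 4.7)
The plan is to construct, for each $n \geq 2$ and $d \geq 1$, an explicit geodesic on $(M, g)$ with finite maximal existence time, along which two landmarks collide. I will first handle the case $n = 2$, where translational and rotational invariance of the Hamiltonian reduce the problem to a one-dimensional ODE in the interlandmark distance, and then extend to general $n$ by adding passive landmarks with zero initial momentum.

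For $n = 2$ and $d \geq 1$, the Hamiltonian reads
$$\ham(\lm, \mom) = \tfrac{1}{2} \kernl(0)\bigl(|p_1|^2 + |p_2|^2\bigr) + \kernl(\|x_1 - x_2\|) \, p_1 \cdot p_2.$$
Take the symmetric initial data $x_1 = -x_2 = \tfrac{r_0}{2} e_1$ and $p_1 = -p_2 = -p_0 e_1$ with $r_0, p_0 > 0$. Rotations fixing $e_1$, together with the composition of label swap and the global reflection $(x_i, p_i) \mapsto (-x_i, -p_i)$, are symmetries of $\ham$ under which this initial condition is invariant; by uniqueness of solutions, the trajectory remains of the form $x_1(t) = -x_2(t) = \tfrac{r(t)}{2} e_1$ and $p_1(t) = -p_2(t) = -p(t)\, e_1$ for scalar functions $r(t), p(t)$. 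The conserved energy is $E = (\kernl(0) - \kernl(r)) p^2 > 0$, and the first Hamilton equation reduces to $\dot r = -2(\kernl(0) - \kernl(r)) p = -2\sqrt{E(\kernl(0) - \kernl(r))}$ after eliminating $p$ via energy conservation. Since $r$ is strictly decreasing, separation of variables yields the maximal existence time
$$T = \frac{1}{2\sqrt{E}} \int_0^{r_0} \frac{\db r}{\sqrt{\kernl(0) - \kernl(r)}},$$
which is finite by assumption~(\ref{eq:kernlintegrabl}), establishing incompleteness for $n = 2$.

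For $n > 2$, I add passive landmarks $x_3, \ldots, x_n$ at pairwise widely separated locations far from $x_1, x_2$, with zero initial momentum. A direct computation shows that every term in $\dot p_j$ contains $p_j$ as a factor, so the subspace $\{p_3 = \cdots = p_n = 0\}$ is invariant under the Hamiltonian flow. On this subspace the equations for $(x_1, x_2, p_1, p_2)$ coincide exactly with the two-landmark reduction, so $x_1$ and $x_2$ still collide at the same time $T$. The main obstacle is controlling the drift of the passive landmarks, which satisfy $\dot x_j = \kernl(\|x_j - x_1\|) p_1 + \kernl(\|x_j - x_2\|) p_2$ throughout $[0, T)$, even though $p(t) \to \infty$ as $t \to T^-$. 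Applying the mean value theorem to $\kernl$ gives $|\dot x_j| \leq C\, p(t)\, r(t)$, where $C$ depends on the distance from $x_j$ to $\{x_1, x_2\}$, and by energy conservation $p(t) r(t) = \sqrt{E\, r(t)^2 / (\kernl(0) - \kernl(r(t)))}$. This stays bounded on $[0, T]$ because monotonicity and integrability of $1/\sqrt{\kernl(0) - \kernl(r)}$ near zero force $r/\sqrt{\kernl(0) - \kernl(r)} \to 0$ as $r \to 0^+$. Consequently each passive landmark's total displacement over $[0, T)$ is bounded by a constant depending only on $\kernl$, $r_0$, $E$, and the initial separations, and choosing these separations sufficiently large prevents any further collisions before time $T$, completing the reduction and the proof.
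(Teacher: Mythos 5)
Your proof is correct and follows essentially the same route as the paper: a symmetric head-on two-landmark collision, reduction via momentum and energy conservation to the separable ODE $\dot r = -2\sqrt{E}\sqrt{\kernl(0)-\kernl(r)}$ with finite collision time under~\eqref{eq:kernlintegrabl}, and passive landmarks with zero initial momentum to handle $n>2$. The only substantive difference is that your careful control of the passive landmarks' drift is unnecessary: whatever they do, the closed $(x_1,x_2,p_1,p_2)$ subsystem forces the maximal existence time of the full Hamiltonian flow to be at most $T<\infty$, which already yields geodesic incompleteness.
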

We prove Proposition~\ref{propn:geoincomplete} by explicitly identifying geodesic paths which cannot be extended to all times, in the sense that two landmarks collide in finite time. We further observe that the integrability criterion~\eqref{eq:kernlintegrabl} is true for all $a>0$ if it is satisfied for some $a>0$.

\begin{proposition}[Geodesic completeness]\label{propn:geocomplete}
    Let $(M,g)$ be the landmark space for any number of landmarks in any $\R^d$. Suppose that $\kernl\colon[0,\infty)\to\R$ is smooth on $(0,\infty)$ and satisfies, for all $a>0$,
    \begin{displaymath}
        \int_0^a\frac{\db r}{\sqrt{\kernl(0)-\kernl(r)}}=\infty,
    \end{displaymath}
    that is, the improper integral does not converge. Then $(M,g)$ is geodesically complete.
\end{proposition}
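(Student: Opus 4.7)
My plan is to prove geodesic completeness by ruling out the only two possible ways a geodesic can fail to extend---escape to infinity and landmark collision in finite time---using conservation of the Hamiltonian and a Cauchy--Schwarz estimate that reduces arclength bounds in the multi-landmark metric $g$ to the scalar integrability condition appearing in the hypothesis.

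Since $\kernl$ is smooth on $(0, \infty)$, Hamilton's equations for $\ham(\lm, \mom) = \frac{1}{2}\sum_{i,j=1}^n \kernl(\|x_i - x_j\|)\langle p_i, p_j\rangle$ admit a unique smooth local flow on $T^*M$, so a maximal geodesic $\gamma\colon [0, T)\to M$ fails to extend only if $\gamma(t)$ leaves every compact subset of $M \subset \R^{nd}$, meaning either some $\|x_i(t)\|\to\infty$ or some $\|x_i(t) - x_j(t)\|\to 0$. To rule out the first alternative, I would use that each entry of the cometric matrix $G^{-1}(\lm)$ is bounded by $\kernl(0)$, so $\|G^{-1}(\lm)\|_{\mathrm{op}} \leq n\kernl(0)$ uniformly on $M$, and then the positive-semidefinite inequality $G^{-2} \preceq \|G^{-1}\|_{\mathrm{op}}\,G^{-1}$ combined with the conservation law $\mom^T G^{-1}\mom = 2\ham$ yields the uniform Euclidean velocity bound
\begin{displaymath}
    |\dot\lm(t)|^2_{\mathrm{Eucl}} = \mom(t)^T G^{-2}(\lm(t))\,\mom(t) \leq 2n\kernl(0)\,\ham,
\end{displaymath}
so no landmark can escape to infinity on a finite time interval.

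For the second alternative, suppose for contradiction that $T < \infty$ and $\|x_i(t_n) - x_j(t_n)\|\to 0$ along some $t_n \to T$. After passing to a subsequence and relabelling, I may assume the colliding pair is $(1, 2)$; write $r(t) = \|x_1(t) - x_2(t)\|$ and $e(t) = (x_1(t) - x_2(t))/r(t)$. The key estimate comes from applying the Cauchy--Schwarz inequality $\langle v, \omega\rangle^2 \leq g(v, v)\,g^*(\omega, \omega)$ to the tangent vector $v = \dot\lm(t)$ and the cotangent vector $\omega = (e(t), -e(t), 0, \ldots, 0) \in T^*_{\lm(t)}M$. Using \eqref{eq:defnginv}, a direct computation gives $g^*(\omega, \omega) = 2(\kernl(0) - \kernl(r(t)))$ and $\langle\dot\lm, \omega\rangle = \langle\dot x_1 - \dot x_2, e\rangle = \dot r(t)$, so
\begin{displaymath}
    |\dot\lm(t)|_g \geq \frac{|\dot r(t)|}{\sqrt{2(\kernl(0) - \kernl(r(t)))}}.
\end{displaymath}
Since $|\dot\gamma|_g \equiv \sqrt{2\ham}$ by conservation of $\ham$, the length of $\gamma|_{[0, t_n]}$ equals $\sqrt{2\ham}\,t_n \leq \sqrt{2\ham}\,T$. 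Setting $u(t) = F(r(t))$ for an antiderivative $F$ of $r\mapsto 1/\sqrt{2(\kernl(0) - \kernl(r))}$ on $(0,\infty)$ and integrating the previous estimate yields
\begin{displaymath}
    \sqrt{2\ham}\,T \geq \int_0^{t_n} |\dot u(s)|\,\db s \geq |F(r(t_n)) - F(r(0))|,
\end{displaymath}
and the divergence hypothesis forces $|F(r(t_n))|\to\infty$ as $r(t_n)\to 0$, a contradiction.

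The main conceptual step is the choice of cotangent vector $\omega = (e, -e, 0, \ldots, 0)$: it is engineered so that the cometric quadratic form $g^*(\omega, \omega)$ collapses to the scalar $2(\kernl(0) - \kernl(r))$ with no contribution from the momenta of the non-colliding landmarks, thereby reducing the multi-landmark completeness problem to the one-dimensional integrability criterion. Everything else in the argument is a routine combination of energy conservation and change of variables.
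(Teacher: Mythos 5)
Your proof is correct, and the central estimate is the same as the paper's: your covector $\omega=(e,-e,0,\dots,0)$ is exactly the differential $\db f_{12}$ of the pairwise distance function used in the paper's Lemma~\ref{lem:nocollision}, and the Cauchy--Schwarz bound $|\dot\gamma|_g\geq|\dot r|/\sqrt{2(\kernl(0)-\kernl(r))}$ is identical. Where you diverge is in the packaging. The paper proves \emph{metric} completeness: it shows that \emph{every} piecewise differentiable curve tending to the collision set or to Euclidean infinity in finite time has infinite length (Lemmas~\ref{lem:nocollision} and~\ref{lem:norunningaway}, plus a cited criterion for completeness of open subsets), and then invokes Hopf--Rinow. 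You instead work directly with the geodesic: the ODE escape lemma reduces matters to the two blow-up alternatives, conservation of $\ham$ gives $|\dot\gamma|_g\equiv\sqrt{2\ham}$ so the arclength up to time $T<\infty$ is finite, and the divergence hypothesis makes the arclength needed to reach a collision infinite; escape to infinity is excluded by the uniform Euclidean speed bound $\mom^\top G^{-2}\mom\leq\|G^{-1}\|_{\mathrm{op}}\,\mom^\top G^{-1}\mom\leq 2n\kernl(0)\ham$ rather than by an infinite-length argument. Your route is more self-contained (no Hopf--Rinow, no external completeness lemma) but proves only geodesic completeness rather than the stronger metric completeness. One step you should make explicit: the escape lemma applies to the flow on $T^\star M$, so to conclude that $\gamma(t)$ leaves every compact subset of $M$ you must rule out momentum blow-up while $\lm(t)$ stays in a compact $K\subset M$; this follows in one line from $2\ham=\mom^\top G^{-1}\mom\geq\lambda_{\min}(K)\|\mom\|^2$ with $\lambda_{\min}(K)>0$ the minimum eigenvalue of the cometric over $K$, but as written it is asserted without justification.
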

We prove Proposition~\ref{propn:geocomplete} by first showing that every curve which tends to the collision set
\begin{displaymath}
    C=
    \R^{nd}\setminus M
    =\{\lm=(x_1,\dots,x_n):x_1,\dots,x_n\in\R^d\text{ with } x_i=x_j\text{ for some }i\not=j\}
\end{displaymath}
or to Euclidean infinity in finite time must have infinite length, and then applying the Hopf--Rinow theorem. It is worthwhile to remark that no additional criterion is needed to exclude tending to Euclidean infinity in finite time.

Figure~\ref{fig:kernels} illustrates the distinctive behavior near zero of the two kernels which correspond to $\kernl(r)=\exp(-r)$ and $\kernl(r)=2(1+r)\exp(-r)$, respectively, and which give rise to geodesically incomplete and geodesically complete landmark spaces, respectively. Note that the latter is in line with Theorem~\ref{thm:BBM} because $(x_i,x_j)\mapsto 2(1+\|x_i-x_j\|)\exp(-\|x_i-x_j\|)$ is $C^1$ in both components.
\begin{figure}[ht]
    \centering
    \includegraphics[width=0.7\textwidth]{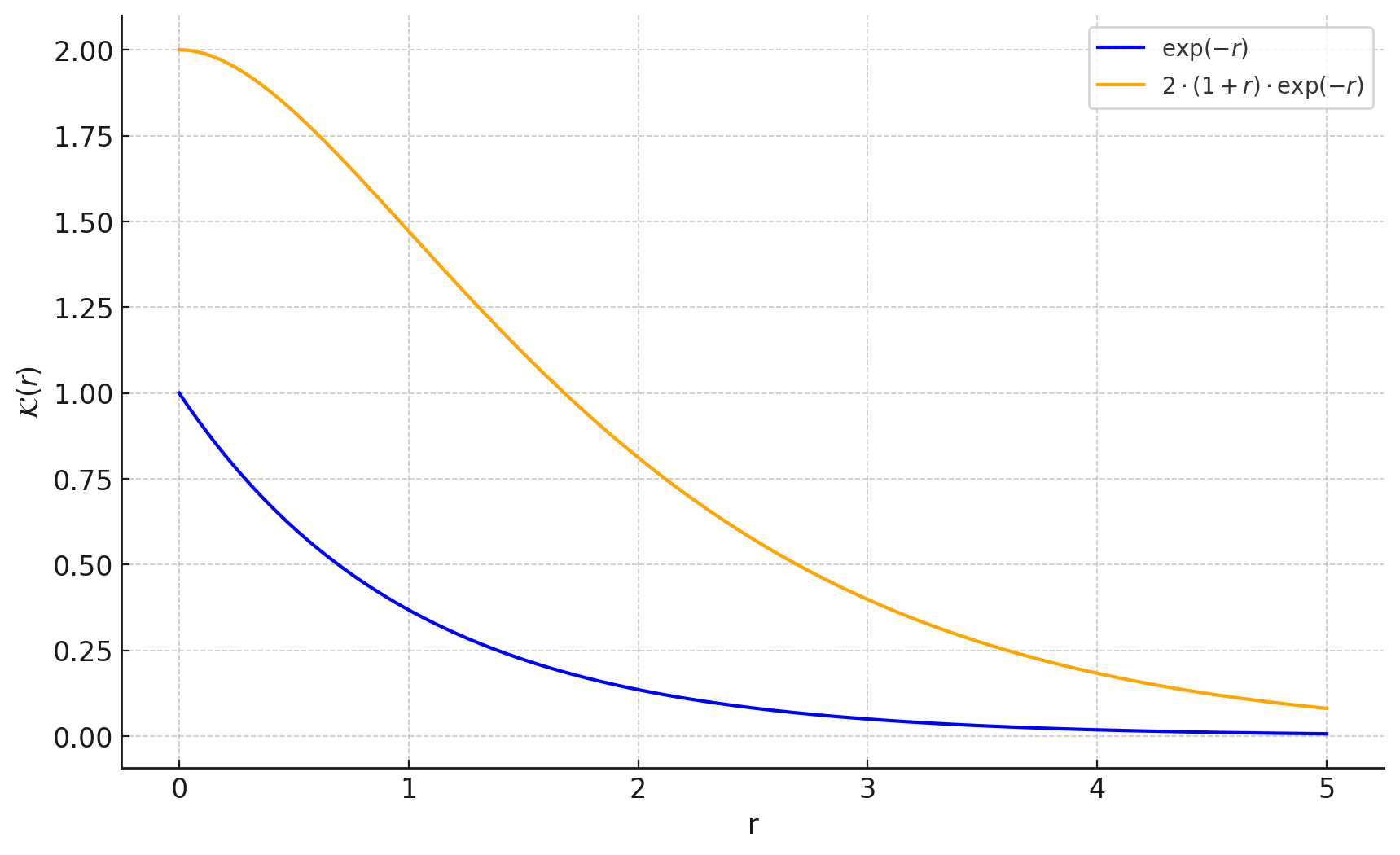}
    \caption{Illustrating kernels given by $\mathcal K(r) = \exp(-r)$ and $\mathcal K(r) = 2 (1 + r) \exp(-r)$, respectively, where $\mathcal K$ represents the cometric kernel of a landmark configuration space. The integrability condition~\eqref{eq:kernlintegrabl} is not satisfied for the former leading to geodesic incompleteness, while the latter satisfies the integrability condition and thus gives geodesic completeness.}
    \label{fig:kernels}
\end{figure}

\medskip

If the scalar function $\kernl\colon[0,\infty)\to\R$ satisfies $\kernl(0)-\kernl(r)=D r^\gamma+o(r^\gamma)$ for $D,\gamma>0$ as $r\downarrow 0$, then, by Theorem~\ref{thm:main}, the associated landmark spaces are geodesically complete if and only if $\gamma\geq 2$. On landmark spaces of exactly two landmarks, this threshold coincides with the one for stochastic completeness derived in~\cite{LongtimeExistenceBrownian2024}. However, by considering more general kernels, we obtain landmark spaces that are geodesically complete but stochastically incomplete. This is of interest in its own right because it provides new examples of Riemannian manifolds where the two notions of completeness do not coincide. The following is discussed in more detail in Section~\ref{sec:geostodiff}.
\begin{theorem}[Geodesically complete but stochastically incomplete landmark space]\label{thm:geostodiff}
    Fix $d=2$ and some constant $c\in(1,2]$. Let $\kernl\colon[0,\infty)\to\R$ be smooth on $(0,\infty)$ and such that, for $r\in[0,\frac{1}{2}]$,
    \begin{displaymath}
        \kernl(r)=1-r^2\left(1-\log(r)\right)^c.
    \end{displaymath}
    Then the associated landmark space $(M,g)$ of exactly two landmarks is geodesically complete but stochastically incomplete.
\end{theorem}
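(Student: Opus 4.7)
The plan is to verify two properties of the given kernel: the integrability hypothesis of Proposition~\ref{propn:geocomplete} for geodesic completeness, and the failure of the stochastic completeness criterion from~\cite{LongtimeExistenceBrownian2024} for stochastic incompleteness. Both checks reduce to evaluating improper integrals near $r=0$, handled uniformly by the substitution $u = 1 - \log r$.

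For geodesic completeness, on $[0,1/2]$ we have $\mathcal{K}(0)-\mathcal{K}(r)=r^2(1-\log r)^c$, so that for any $a\in(0,1/2]$,
\[
\int_0^a\frac{\db r}{\sqrt{\mathcal{K}(0)-\mathcal{K}(r)}}=\int_0^a\frac{\db r}{r(1-\log r)^{c/2}}=\int_{1-\log a}^{\infty}\frac{\db u}{u^{c/2}}.
\]
Since $c/2\leq 1$ throughout the range $c\in(1,2]$, this integral diverges. For $a>1/2$ the contribution on $[1/2,a]$ is finite because $\mathcal{K}$ is smooth and strictly positive on $(0,\infty)$, so divergence persists for every $a>0$, and Proposition~\ref{propn:geocomplete} yields geodesic completeness.

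For stochastic incompleteness, we invoke the integrability characterization of stochastic completeness for two-landmark spaces in $\R^2$ from~\cite{LongtimeExistenceBrownian2024}. As noted in the paragraph preceding the statement, this characterization shares the polynomial threshold $\gamma=2$ with the geodesic one on kernels of the form $\mathcal{K}(0)-\mathcal{K}(r)\sim Dr^{\gamma}$, but its dependence on sub-polynomial corrections is stronger. Specializing it to the kernel in the theorem and substituting $u=1-\log r$ reduces the test to finiteness of an integral of the form $\int^{\infty}u^{-c}\,\db u$, which converges precisely when $c>1$. For every $c\in(1,2]$ the criterion of~\cite{LongtimeExistenceBrownian2024} therefore fails, and $(M,g)$ is stochastically incomplete.

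The main conceptual point, and principal obstacle, is identifying the precise form of the stochastic integrability criterion from~\cite{LongtimeExistenceBrownian2024}: the logarithmic exponent $c$ is compared against two different thresholds ($c\leq 2$ geodesically via $u^{-c/2}$, $c\leq 1$ stochastically via $u^{-c}$), and the window $c\in(1,2]$ is chosen exactly so that the former diverges while the latter converges, separating the two notions of completeness.
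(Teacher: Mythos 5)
Your first half is correct and is essentially the paper's computation: on $[0,\tfrac12]$ one has $\sqrt{\kernl(0)-\kernl(r)}=r(1-\log r)^{c/2}$, the substitution $u=1-\log r$ turns the completeness integral into $\int^{\infty}u^{-c/2}\dd u$, which diverges because $c/2\le 1$ for $c\in(1,2]$, and Proposition~\ref{propn:geocomplete} then gives geodesic completeness for every $a>0$.

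The second half has a genuine gap. You never state the stochastic criterion you are invoking; you only assert that it ``reduces to finiteness of $\int^{\infty}u^{-c}\dd u$'' and you yourself concede that identifying its precise form is the principal obstacle --- but that identification is the actual content of this half of the proof. What the paper does is: (i) write down the It\^o SDE $\db r_t=\sigma(r_t)\dd B_t+b(r_t)\dd t$ for the inter-landmark distance, with $\sigma(r)=\sqrt{2(\kernl(0)-\kernl(r))}$ and an explicit drift $b$ taken from~\cite{LongtimeExistenceBrownian2024}; (ii) apply the Cherny--Engelbert classification of singular boundary points (Theorem~\ref{thm:CE213}), which requires verifying \emph{three} integral conditions, not one: $\int_0^a\rho(r)\dd r<\infty$ (this is where $d=2$ enters, via the explicit formula for $\rho$), $\int_0^a\frac{1+|b(r)|}{\rho(r)\sigma(r)^2}\dd r=\infty$ (checked by comparison with the divergent geodesic integral via $(\kernl(0)-\kernl(r))^{-1}\ge(\kernl(0)-\kernl(r))^{-1/2}$ near $0$), and $\int_0^a\frac{1+|b(r)|}{\rho(r)\sigma(r)^2}s(r)\dd r<\infty$, whose integrand is asymptotic to $\frac{r}{\kernl(0)-\kernl(r)}=\frac{1}{r(1-\log r)^c}$ and hence gives $\int^{\infty}u^{-c}\dd u<\infty$ for $c>1$. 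Only this last condition appears, implicitly, in your argument. There is also a logical-direction issue in your phrasing: you say a completeness criterion ``fails'' and conclude incompleteness, but the failure of a sufficient condition for stochastic completeness proves nothing; one needs a positive statement that zero is hit with positive probability, which is exactly what Theorem~\ref{thm:CE213} supplies once all three hypotheses are verified. Finally, the threshold result of~\cite{LongtimeExistenceBrownian2024} quoted in the introduction is stated for kernels with $\kernl(0)-\kernl(r)=Dr^{\gamma}+o(r^{\gamma})$, which the present kernel is deliberately chosen to violate, so it cannot be cited off the shelf as a ready-made characterization.
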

We will see that Theorem~\ref{thm:geostodiff} is a consequence of having chosen the function $\kernl\colon[0,\infty)\to\R$ such that, for $a>0$,
\begin{displaymath}
    \int_0^a\frac{\dd r}{\sqrt{\kernl(0)-\kernl(r)}}=\infty
    \quad\text{and}\quad
    \int_0^a\frac{r}{\kernl(0)-\kernl(r)}\dd r<\infty,
\end{displaymath}
which cannot be achieved if we have $\kernl(0)-\kernl(r)=D r^\gamma+o(r^\gamma)$ for $D,\gamma>0$ as $r\downarrow 0$.

\medskip

By extending the analysis in the Hamiltonian approach used to prove Proposition~\ref{propn:geoincomplete}, we will further demonstrate that in the two-landmark system only very special choices of initial conditions, namely those where the total angular momentum is zero, lead to geodesics that cannot be extended to all times. Specifically, we establish the following theorem, which is stated more precisely as Theorem~\ref{thm:twolandmarksrig} in Section~\ref{sec:twolandmarksystem}.
\begin{theorem}\label{thm:twolandmarks}
    For the landmark space of exactly two landmarks in any $\R^d$, geodesic paths with non-zero total angular momentum can be extended to all times, regardless of the choice of smooth kernel $\kernl\colon[0,\infty)\to\R$.
\end{theorem}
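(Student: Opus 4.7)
The plan is to pass to center-of-mass and relative coordinates, observe that the two-landmark Hamiltonian becomes a central-force problem in the relative coordinates with a conserved ``relative'' angular-momentum bivector $\ell$, and then show that $\ell \neq 0$ produces a centrifugal barrier keeping the inter-landmark distance $r = \|x_2 - x_1\|$ uniformly bounded below on every bounded time interval. Global existence of the geodesic then follows from standard ODE continuation.

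First I introduce the canonical transformation $X = \tfrac{1}{2}(x_1 + x_2)$, $y = x_2 - x_1$, $P = p_1 + p_2$, $q = \tfrac{1}{2}(p_2 - p_1)$, which preserves the symplectic form. Substituting into $\ham = \tfrac{1}{2}\kernl(0)(\|p_1\|^2 + \|p_2\|^2) + \kernl(r)\, p_1 \cdot p_2$ yields the reduced Hamiltonian
\[
    \ham = \tfrac{1}{4}(\kernl(0) + \kernl(r))\|P\|^2 + h(r)\,\|q\|^2, \qquad h(r) := \kernl(0) - \kernl(r).
\]
Since $X$ is cyclic, $P$ is conserved; since $\ham$ depends on $y$ only through $r$, the bivector $\ell := y \wedge q \in \Lambda^2 \R^d$ is also conserved. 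A short computation shows $x_1 \wedge p_1 + x_2 \wedge p_2 = X \wedge P + \ell$, so after translating so that $X(0)\wedge P = 0$ the ``total angular momentum'' in the theorem is exactly $\ell$, and the hypothesis becomes $\ell \neq 0$.

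Next I derive a radial ODE. Hamilton's equation $\dot y = 2 h(r)\,q$ gives $y \cdot q = r\dot r/(2h(r))$, and combined with $|\ell|^2 = \|y\|^2\|q\|^2 - (y\cdot q)^2$ this yields
\[
    \|q\|^2 = \frac{|\ell|^2}{r^2} + \frac{\dot r^2}{4 h(r)^2}.
\]
Substituting into $h(r)\|q\|^2 = \tilde E(r) := E - \tfrac{1}{4}(\kernl(0)+\kernl(r))\|P\|^2$, where $E = \ham$ is the conserved energy, gives
\[
    \dot r^2 = 4 h(r)\,\tilde E(r) - \frac{4 h(r)^2 |\ell|^2}{r^2}.
\]
Non-negativity of the left-hand side forces $h(r)\,|\ell|^2 \leq r^2 \tilde E(r) \leq E r^2$ along the trajectory; plugging this back gives $\dot r^2 \leq 4 h(r) E \leq 4 E^2 r^2/|\ell|^2$, so $|\dot r| \leq (2E/|\ell|)\,r$, whence
\[
    r(t) \geq r(0)\exp\!\left(-\frac{2 E t}{|\ell|}\right).
\]
This is strictly positive on every bounded time interval, ruling out collisions in finite time.

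With $r(t)$ bounded below on bounded intervals, $h(r(t))$ is bounded below by a positive constant there, so $\|q\|^2 \leq E/h(r(t))$ is bounded and hence so are $\|p_1\|, \|p_2\|$; the estimates $\|\dot X\| \leq \kernl(0)\|P\|$ and $\|\dot y\| \leq 2\sqrt{\kernl(0) E}$ keep the positions growing at most linearly, so the Hamiltonian trajectory stays in a compact subset of $TM$ on every bounded time interval and ODE continuation extends the geodesic globally. I expect the main technical point to be the careful identification of the conserved bivector $\ell = y \wedge q$ with the ``total angular momentum'' appearing in the theorem in arbitrary ambient dimension $d$, where angular momentum is an element of $\Lambda^2 \R^d$; this is what the more precise Theorem~\ref{thm:twolandmarksrig} in Section~\ref{sec:twolandmarksystem} formalizes, and the rest is a classical ``centrifugal barrier'' estimate adapted to the kernel-induced cometric.
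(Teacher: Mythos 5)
Your proof is correct, and while it starts from the same reduction as the paper --- center-of-mass coordinates, conservation of $P$, of the energy, and of the relative angular-momentum bivector, together with the Lagrange identity $\|y\wedge q\|^2+\langle y,q\rangle^2=\|y\|^2\|q\|^2$ (equation~\eqref{eq:identityforomega}) --- the decisive step is genuinely different. The paper combines the constraint $r^2\bigl(D/(\kernl(0)-\kernl(r))+c\bigr)\ge\omega^2$ (equation~\eqref{eq:inequality4r}) with a case split on the sign of $D=(\kernl(0)-\kernl(r))(\|Q\|^2-c)$: when $D\le 0$ it gets the uniform bound $r^2\ge\omega^2/c$, and when $D>0$ it argues indirectly that a collision would force $\kernl(0)-\kernl(r)\lesssim r^2$ near zero, so that the kernel satisfies the hypothesis of Proposition~\ref{propn:geocomplete} and the whole manifold is complete anyway; escape to infinity is handled by citing Lemma~\ref{lem:norunningaway}. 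You instead turn the same constraint into the differential inequality $|\dot r|\le (2E/|\ell|)\,r$ and integrate it to the exponential lower bound $r(t)\ge r(0)\exp(-2Et/|\ell|)$, which treats all sign cases of $\langle p_1,p_2\rangle$ uniformly, is quantitative, and lets you finish with explicit a priori bounds on momenta and positions plus ODE continuation rather than an appeal to the completeness machinery of Section~\ref{sec:geocomplete}. I checked the algebra: the reduced Hamiltonian $\tfrac14(\kernl(0)+\kernl(r))\|P\|^2+(\kernl(0)-\kernl(r))\|q\|^2$, the radial equation $\dot r^2=4h\tilde E-4h^2|\ell|^2/r^2$, and the chain $h|\ell|^2\le r^2\tilde E\le Er^2$ (using $\kernl>0$ so $\tilde E\le E$ and $h\le\kernl(0)$) are all right. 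One small point of interpretation you handled correctly but should keep explicit: the ``total angular momentum'' $L=X\wedge P+y\wedge q$ is not translation-invariant, whereas $\ell=y\wedge q$ is, and the precise hypothesis (as in Theorem~\ref{thm:twolandmarksrig}) is $\ell\ne 0$; your normalization $X(0)\wedge P=0$ reconciles the informal statement with the rigorous one in the same way the paper implicitly does.
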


The behavior observed for exactly two landmarks is comparable with other two-body problems where, for example, a body can fall toward the sun leading to a finite-time collapse, but in the presence of non-zero angular momentum, it will orbit around the sun forever.

\subsection{Paper outline}
In Section~\ref{sec:hamiltonian_dynamics}, we revisit the Hamiltonian formulation of the geodesic equation on landmark spaces and we prove conservation of the total linear momentum as well as of the total angular momentum. We use this in Section~\ref{sec:geoincomplete} to prove Proposition~\ref{propn:geoincomplete}, that is, the part of Theorem~\ref{thm:main} which guarantees geodesic incompleteness. We proceed by proving Proposition~\ref{propn:geocomplete}, that is, the part of Theorem~\ref{thm:main} which ensures geodesic completeness, in Section~\ref{sec:geocomplete}. We continue by demonstrating in Section~\ref{sec:geostodiff} that Theorem~\ref{thm:geostodiff} indeed provides examples of landmark spaces that are geodesically complete but stochastically incomplete. We then close by analyzing the two-landmark system in more detail in Section~\ref{sec:twolandmarksystem} and, in particular, by establishing Theorem~\ref{thm:twolandmarks} that the presence of non-zero total angular momentum in this case prevents finite-time breakdown.

\paragraph{Acknowledgements.}
The authors thank the organizers and participants of the Summer Shape Workshops (\href{https://sites.google.com/view/shape-workshop/home}{https://sites.google.com/view/shape-workshop/home}) ``Math in the Mine'' in 2023 and ``Math in Maine'' in 2024 , where most of this work was done.   The third author is supported by a research grant (VIL40582) from VILLUM FONDEN and the Novo Nordisk Foundation grants NNF18OC0052000, NNF24OC0093490, NNF24OC0089608.

\section{Hamiltonian dynamics on landmark space}
\label{sec:hamiltonian_dynamics}
For the landmark manifold $(M,g)$, the easiest way to derive the geodesic equation is to take the Hamiltonian approach on the cotangent bundle $T^\star M$. This is due to the Hamiltonian approach working in terms of the cometric of the metric $g$, whose components explicitly defined by~\eqref{eq:defnginv} are relatively simple, rather than the metric $g$ itself, whose components become quite cumbersome for more than two landmarks.

We use coordinates $(\lm,\mom)=(x_1,\dots,x_n,p_1,\dots,p_n)$ for the cotangent bundle $T^\star M$ with $p_i\in\R^d$ for all $i\in\{1,\dots,n\}$. The Hamiltonian $\ham\colon T^\star M\to\R$ for the landmark space $(M,g)$ is then given by, in terms of the Euclidean inner product and the Euclidean norm on $\R^d$,
\begin{equation}\label{eq:hamiltonian}
    \ham=\frac{1}{2}\sum_{i,j=1}^n\langle p_i,g^{ij}(\lm)p_j \rangle
    =\frac{1}{2}\sum_{i,j=1}^n\kernl(\|x_i-x_j\|)\langle p_i,p_j\rangle.
\end{equation}
The associated Hamilton's equations take the form, for $i\in\{1,\dots,n\}$,
\begin{align}
    \frac{\db x_i}{\db t}\label{eq:hamx}
    &=\sum_{j=1}^n \kernl(\|x_i-x_j\|)p_j,\\
    \frac{\db p_i}{\db t}\label{eq:hamp}
    &=-\sum_{\substack{j=1\\j\not=i}}^n \kernl'(\|x_i-x_j\|)\frac{x_i-x_j}{\|x_i-x_j\|}\langle p_i,p_j\rangle.
\end{align}
Here, the notation used means that these equations for vector quantities $x_i,p_i\in\R^d$ are satisfied separately for each component in $\R^d$. We note that the expression on the right-hand side of Hamilton's equation~\eqref{eq:hamp} is well-defined on $M$ because $x_i\not=x_j$ for $i\not= j$.

The system of Hamilton's equations~\eqref{eq:hamx} and \eqref{eq:hamp} consists of coupled first-order equations which describe the geodesic equation on the landmark space $(M,g)$ in such a way that geodesics on the manifold $M$ arise as projections of solutions in $T^\star M$ to Hamilton's equations.

In our subsequent analysis, we crucially use that for any solution to the system of Hamilton's equations~\eqref{eq:hamx} and \eqref{eq:hamp} the total momentum is conserved. It is a consequence of the Hamiltonian $\ham$ being invariant under translation symmetry along with Noether's theorem.
\begin{lemma}[Conservation of momentum]\label{lem:conservationofmomentum}
    Let $(\lm,\mom)\colon [0,T)\to T^\star M$ for $T>0$ be a solution to the system of Hamilton's equations~\eqref{eq:hamx} and \eqref{eq:hamp}. Then the total momentum $P\colon [0,T)\to\R^d$ defined by, for $t\in[0,T)$,
    \begin{equation}\label{eq:totalmom}
        P(t)=\sum_{i=1}^n p_i(t)
    \end{equation}
    is conserved.
\end{lemma}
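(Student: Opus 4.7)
The plan is to give a direct computational proof by differentiating $P(t)$ and showing the resulting double sum vanishes by an antisymmetry argument. First I would compute
\begin{displaymath}
    \frac{\db P}{\db t}=\sum_{i=1}^n\frac{\db p_i}{\db t}=-\sum_{i=1}^n\sum_{\substack{j=1\\j\neq i}}^n \kernl'(\|x_i-x_j\|)\frac{x_i-x_j}{\|x_i-x_j\|}\langle p_i,p_j\rangle,
\end{displaymath}
using Hamilton's equation~\eqref{eq:hamp}.

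Next I would set, for each pair $i\neq j$,
\begin{displaymath}
    F_{ij}=\kernl'(\|x_i-x_j\|)\frac{x_i-x_j}{\|x_i-x_j\|}\langle p_i,p_j\rangle
\end{displaymath}
and observe that $F_{ij}=-F_{ji}$. Indeed, $\|x_i-x_j\|=\|x_j-x_i\|$ and $\langle p_i,p_j\rangle=\langle p_j,p_i\rangle$ are symmetric in $i$ and $j$, while the vector factor $x_i-x_j$ changes sign when $i$ and $j$ are swapped. Summing the antisymmetric quantity $F_{ij}$ over all ordered pairs $(i,j)$ with $i\neq j$ yields zero, and hence $\db P/\db t=0$ on $[0,T)$, which proves the lemma.

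There is essentially no obstacle here since Hamilton's equation~\eqref{eq:hamp} is given explicitly and the antisymmetry is immediate; the only point that deserves a moment of care is the interpretation as a vector identity in $\R^d$, which follows because the cancellation happens componentwise. Conceptually, the same conclusion is of course an instance of Noether's theorem applied to the translation symmetry $x_i\mapsto x_i+v$ of the Hamiltonian~\eqref{eq:hamiltonian}, which depends on the landmark positions only through the differences $x_i-x_j$; I would mention this as a remark but rely on the direct computation for the formal proof since it requires no additional machinery.
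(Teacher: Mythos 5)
Your proof is correct and is essentially identical to the paper's: both differentiate $P$, apply Hamilton's equation~\eqref{eq:hamp}, and kill the double sum by the antisymmetry of $x_i-x_j$ together with the symmetry of $\|x_i-x_j\|$ and $\langle p_i,p_j\rangle$ under swapping $i$ and $j$. Your explicit introduction of $F_{ij}$ and the closing remark about Noether's theorem merely make explicit what the paper states more tersely.
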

\begin{proof}
    It follows from~\eqref{eq:hamp}, $\langle p_i,p_j\rangle=\langle p_j,p_i\rangle$ and $x_j-x_i=-(x_i-x_j)$ that
    \begin{displaymath}
        \frac{\db P}{\db t}
        =\sum_{i=1}^n\frac{\db p_i}{\db t}
        =-\sum_{i=1}^n\sum_{\substack{j=1\\j\not=i}}^n \kernl'(\|x_i-x_j\|)\frac{x_i-x_j}{\|x_i-x_j\|}\langle p_i,p_j\rangle=0,
    \end{displaymath}
    which implies the claimed result.
\end{proof}

In addition to the conservation of the total linear momentum, the total angular momentum of the considered Hamiltonian system is also conserved as a consequence of rotational invariance. The latter can be expressed conveniently in terms of the wedge product which, for two vectors $y,z\in \R^d$, is the $\frac{1}{2}d(d-1)$-dimensional vector $y\wedge z$ whose entries are $y_k z_\ell - y_\ell z_k$ for $k,\ell\in\{1,\dots,d\}$ with $k<\ell$. Notably, we have $y\wedge z=-z\wedge y$.
\begin{lemma}[Conservation of angular momentum]\label{lem:conservationofangmom}
    For a solution $(\lm,\mom)\colon [0,T)\to T^\star M$ with $T>0$ to the system of Hamilton's equations~\eqref{eq:hamx} and \eqref{eq:hamp}, the total angular momentum $L\colon [0,T)\to \R^{\frac{d(d-1)}{2}}$ given by, for $t\in[0,T)$,
    \begin{displaymath}
        L(t)=\sum_{i=1}^n x_i(t)\wedge p_i(t)
    \end{displaymath}
    is conserved.
\end{lemma}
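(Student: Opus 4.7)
The plan is to differentiate $L(t)$ using the product rule for the wedge product, substitute Hamilton's equations~\eqref{eq:hamx} and~\eqref{eq:hamp}, and then verify that the two resulting sums vanish by antisymmetry of the wedge product together with the symmetry $\mathcal{K}(\|x_i-x_j\|)=\mathcal{K}(\|x_j-x_i\|)$ and $\langle p_i,p_j\rangle=\langle p_j,p_i\rangle$.

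Concretely, since the wedge product is bilinear, I would write
\begin{displaymath}
    \frac{\db L}{\db t}=\sum_{i=1}^n\frac{\db x_i}{\db t}\wedge p_i+\sum_{i=1}^n x_i\wedge\frac{\db p_i}{\db t}.
\end{displaymath}
For the first sum, inserting~\eqref{eq:hamx} yields $\sum_{i,j=1}^n\kernl(\|x_i-x_j\|)\,p_j\wedge p_i$. The diagonal $i=j$ terms vanish since $p_i\wedge p_i=0$, and the off-diagonal terms pair up: swapping the summation indices $i\leftrightarrow j$ and using $p_j\wedge p_i=-p_i\wedge p_j$ together with the symmetry of $\kernl(\|x_i-x_j\|)$ shows the sum equals its own negative, hence vanishes.

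For the second sum, inserting~\eqref{eq:hamp} and using $x_i\wedge x_i=0$ to rewrite $x_i\wedge(x_i-x_j)=-x_i\wedge x_j$ gives
\begin{displaymath}
    \sum_{i=1}^n x_i\wedge\frac{\db p_i}{\db t}=\sum_{i=1}^n\sum_{\substack{j=1\\ j\neq i}}^n\frac{\kernl'(\|x_i-x_j\|)}{\|x_i-x_j\|}\,\langle p_i,p_j\rangle\,x_i\wedge x_j.
\end{displaymath}
Again, swapping $i\leftrightarrow j$ and using that the coefficient $\kernl'(\|x_i-x_j\|)/\|x_i-x_j\|$ and the inner product $\langle p_i,p_j\rangle$ are symmetric in $i,j$ while $x_i\wedge x_j=-x_j\wedge x_i$ is antisymmetric shows this sum also equals its own negative and hence vanishes.

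The computation is essentially routine once the correct wedge-product identities and symmetry swaps are in place; no real obstacle arises. The only point requiring a small amount of care is the manipulation $x_i\wedge(x_i-x_j)=-x_i\wedge x_j$ using $x_i\wedge x_i=0$, which is what lets the antisymmetry argument apply uniformly. Alternatively, one could appeal directly to Noether's theorem, noting that $\ham$ as written in~\eqref{eq:hamiltonian} is invariant under the diagonal $SO(d)$ action $x_i\mapsto Rx_i$, $p_i\mapsto Rp_i$ since it depends only on the rotationally invariant quantities $\|x_i-x_j\|$ and $\langle p_i,p_j\rangle$, and the associated conserved quantities are precisely the components of $L$; but the direct verification above is short enough that I would present it in place of invoking Noether's theorem.
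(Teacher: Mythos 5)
Your proof is correct and follows essentially the same route as the paper: differentiate $L$, substitute Hamilton's equations, and observe that both resulting double sums vanish by antisymmetry of the wedge product combined with the symmetry of the coefficients in $i,j$. The paper states this computation more tersely, but the argument is identical, so no further comparison is needed.
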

\begin{proof}
    Applying Hamilton's equations~\eqref{eq:hamx} and \eqref{eq:hamp} as well as using the antisymmetry of the wedge product, we obtain
    \begin{displaymath}
        \frac{\db L}{\db t}
        = \sum_{i,j=1}^n \kernl(\|x_i-x_j\|)p_j\wedge p_i
        -\sum_{i=1}^n\sum_{\substack{j=1\\j\not=i}}^n \kernl'(\|x_i-x_j\|)\frac{x_i\wedge(x_i-x_j)}{\|x_i-x_j\|}\langle p_i,p_j\rangle=0,
    \end{displaymath}
    and the claimed result follows.
\end{proof}

We remark that Lemma~\ref{lem:conservationofangmom} can alternatively be proven by showing that the Poisson bracket $\{L,H\}$ is constantly equal to zero. The conservation of the total angular momentum will be exploited in the analysis of the two-landmark system performed in Section~\ref{sec:twolandmarksystem}.

\section{Geodesically incomplete landmark space}
\label{sec:geoincomplete}

We establish Proposition~\ref{propn:geoincomplete} by means of geodesic shooting, where we explicitly identify initial conditions for which solutions to Hamilton's equations cannot be extended to all times, in the sense that two landmarks collide. For this analysis, we restrict our attention to initial conditions for which Hamilton's equations simplify sufficiently so that the corresponding solutions can be understood well enough.

\begin{lemma}\label{lem:zeromomenta}
    Suppose that $(\lm,\mom)\colon [0,T)\to T^\star M$ for $T>0$ is a solution to the system of Hamilton's equations~\eqref{eq:hamx} and \eqref{eq:hamp} with $p_i(0)=0$ for some $i\in\{1,\dots,n\}$. Then $p_i(t)=0$ for all $t\in[0,T)$.
\end{lemma}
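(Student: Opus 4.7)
The key observation is that Hamilton's equation~\eqref{eq:hamp} for $p_i$ is \emph{homogeneous linear} in $p_i$, since every term on the right-hand side carries the scalar factor $\langle p_i, p_j\rangle$. Concretely, I would rewrite it as
\begin{displaymath}
    \frac{\db p_i}{\db t} = A_i(t)\, p_i(t),
    \qquad
    A_i(t) = -\sum_{\substack{j=1\\ j\neq i}}^{n}\frac{\kernl'(\|x_i(t)-x_j(t)\|)}{\|x_i(t)-x_j(t)\|}\,(x_i(t)-x_j(t))\,p_j(t)^{T},
\end{displaymath}
where $(x_i-x_j)p_j^T$ denotes the $d\times d$ rank-one matrix whose action on $p_i$ reproduces $\kernl'(\|x_i-x_j\|)\frac{x_i-x_j}{\|x_i-x_j\|}\langle p_i,p_j\rangle$.

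The plan is then to invoke uniqueness for linear ODEs. Since $(\lm,\mom)\colon[0,T)\to T^\star M$ is a solution, we have $x_i(t)\neq x_j(t)$ for $i\neq j$ and $t\in[0,T)$, so the map $t\mapsto A_i(t)$ is continuous (indeed smooth) on $[0,T)$ because $\kernl$ is smooth on $(0,\infty)$. The initial value problem $\dot v = A_i(t)v$ with $v(0)=0$ has the unique solution $v\equiv 0$ on $[0,T)$, so $p_i(t)=0$ throughout.

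Alternatively, and perhaps more explicitly, I would apply Gr\"onwall's inequality to the scalar function $t\mapsto \|p_i(t)\|^2$. From the displayed equation one obtains
\begin{displaymath}
    \frac{\db}{\db t}\|p_i(t)\|^2
    = 2\langle p_i(t),A_i(t)p_i(t)\rangle
    \leq 2\|A_i(t)\|\,\|p_i(t)\|^2,
\end{displaymath}
and since $\|A_i(\cdot)\|$ is continuous on $[0,T)$, hence bounded on every compact subinterval, Gr\"onwall's inequality with $\|p_i(0)\|^2=0$ forces $\|p_i(t)\|^2=0$ for all $t\in[0,T)$.

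I do not anticipate any real obstacle here; the only subtlety worth stating carefully is that the coefficient matrix $A_i(t)$ is well-defined on the entire interval $[0,T)$ precisely because the curve stays in $M$ (no collision) and because $\kernl'$ is evaluated only at strictly positive arguments, so the division by $\|x_i-x_j\|$ is legitimate.
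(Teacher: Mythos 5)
Your proof is correct and follows essentially the same route as the paper: both treat Hamilton's equation~\eqref{eq:hamp} as a homogeneous linear ODE for $p_i$ with coefficients that are continuous in $t$ along the given solution (well-defined because the curve stays in $M$), and conclude $p_i\equiv 0$ by uniqueness for linear ODEs. Your Gr\"onwall variant is just a self-contained proof of that same uniqueness statement, and your explicit rank-one matrix form of the coefficient is a slightly cleaner rendering of what the paper writes as $\langle p_i(t),\Phi_i(t)\rangle$.
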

\begin{proof}
    We see from~\eqref{eq:hamp} that each $p_i$ for $i\in\{1,\dots,n\}$ satisfies a differential equation of the form
    \begin{displaymath}
        \frac{\db p_i}{\db t}=\langle p_i(t),\Phi_i(t)\rangle,
    \end{displaymath}
    for $\Phi_i$ some vector-valued function of time. In the full system, $\Phi_i(t)$ depends on the various $x_j(t)$ as well as $p_j(t)$. However, once we solve this system, we can use the coefficients as the terms in a system involving $p_i(t)$ only. Since all the functions $\Phi_i$ are continuous in $t$ and due to the uniqueness of solutions to linear differential equations with continuous coefficients, it follows in particular that $p_i(0)=0$ implies $p_i(t)=0$ for all $t\in[0,T)$.
\end{proof}

We observe that Lemma~\ref{lem:zeromomenta} implies that for solutions to Hamilton's equations~\eqref{eq:hamx} and \eqref{eq:hamp} subject to the initial condition $p_3(0)=\dots=p_n(0)=0$, the dynamics of the first two landmarks is not affected by the remaining landmarks and is completely described by the system of equations
\begin{align}\label{2landmark1dsys}
\begin{aligned}
    \frac{\db x_1}{\db t}&=\kernl(0)p_1+\kernl(\|x_1-x_2\|)p_2, \qquad
    & \frac{\db p_1}{\db t}
    &=-\kernl'(\|x_1-x_2\|)\frac{x_1-x_2}{\|x_1-x_2\|}\langle p_1,p_2\rangle,\\
    \frac{\db x_2}{\db t}&=\kernl(\|x_1-x_2\|)p_1+\kernl(0)p_2,  \qquad  
    &\frac{\db p_2}{\db t}
    &=\kernl'(\|x_1-x_2\|)\frac{x_1-x_2}{\|x_1-x_2\|}\langle p_1,p_2\rangle.
\end{aligned}
\end{align}
We will perform a more detailed analysis of the above system in Section~\ref{sec:twolandmarksystem}. For the remainder of this section though, it suffices to consider initial conditions which further simply the above system. The behavior of the remaining landmarks $x_3,\dots,x_n$ is non-trivial but irrelevant for the subsequent analysis.

\begin{lemma}\label{lem:shooting}
    Let $(\lm,\mom)\colon [0,T)\to T^\star M$ for $T>0$ be a solution to the system consisting of Hamilton's equations~\eqref{eq:hamx} and \eqref{eq:hamp} whose initial value satisfies $p_1(0)=-p_2(0)$, $p_3(0)=\dots=p_n(0)=0$ and $x_1(0)=-x_2(0)$. For such a solution, we have
    \begin{displaymath}
        p_1(t)=-p_2(t)\text{ and }
        x_1(t)=-x_2(t)\text{ for all }t\in[0,T).
    \end{displaymath}
\end{lemma}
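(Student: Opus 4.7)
The plan is to combine Lemma~\ref{lem:zeromomenta} with the conservation of total momentum from Lemma~\ref{lem:conservationofmomentum}, and then close the argument with a one-line differentiation. First, since $p_j(0)=0$ for each $j\in\{3,\dots,n\}$, Lemma~\ref{lem:zeromomenta} yields $p_j(t)=0$ for all such $j$ and all $t\in[0,T)$. Substituting into Hamilton's equations~\eqref{eq:hamx} and~\eqref{eq:hamp} for $i\in\{1,2\}$, the contributions from the indices $j\geq 3$ drop out, so the evolution of $(x_1,x_2,p_1,p_2)$ is governed entirely by the reduced two-landmark system~\eqref{2landmark1dsys}. The behavior of $x_3,\dots,x_n$ is then irrelevant for the claim.

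Next, I apply Lemma~\ref{lem:conservationofmomentum}: the total momentum $P(t)=\sum_{i=1}^n p_i(t)$ is constant, and the initial conditions give $P(0)=p_1(0)+p_2(0)=0$. Combining this with $p_j(t)=0$ for $j\geq 3$ yields $p_1(t)+p_2(t)=0$ for all $t\in[0,T)$, which is exactly the momentum identity $p_1(t)=-p_2(t)$. To derive the position identity, I add the equations for $x_1$ and $x_2$ in~\eqref{2landmark1dsys}:
\begin{displaymath}
    \frac{\db}{\db t}\bigl(x_1+x_2\bigr)=\bigl(\kernl(0)+\kernl(\|x_1-x_2\|)\bigr)\bigl(p_1+p_2\bigr)=0,
\end{displaymath}
where the second equality uses the momentum identity just established. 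Integrating and applying the initial condition $x_1(0)+x_2(0)=0$ gives $x_1(t)+x_2(t)=0$, hence $x_1(t)=-x_2(t)$, throughout $[0,T)$.

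I do not anticipate any significant obstacle: once Lemma~\ref{lem:zeromomenta} and Lemma~\ref{lem:conservationofmomentum} are available, the lemma reduces to direct bookkeeping. A slightly more conceptual alternative would be to invoke uniqueness of solutions to the reduced Hamiltonian system together with the observation that the involution $(x_1,x_2,p_1,p_2)\mapsto(-x_2,-x_1,-p_2,-p_1)$ preserves~\eqref{2landmark1dsys} and fixes the initial data, and therefore fixes the entire trajectory; but the explicit differentiation above is shorter and makes clear where each of the two assumed symmetries of the initial data is used.
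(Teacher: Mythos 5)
Your proof is correct and follows essentially the same route as the paper: Lemma~\ref{lem:zeromomenta} to kill $p_3,\dots,p_n$, conservation of total momentum to get $p_1=-p_2$, and then differentiating $x_1+x_2$ via~\eqref{2landmark1dsys} (your factored form $(\kernl(0)+\kernl(\|x_1-x_2\|))(p_1+p_2)$ is just a tidier way of writing the paper's cancellation). No gaps.
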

\begin{proof}
    We know from Lemma~\ref{lem:zeromomenta} that the initial condition $p_3(0)=\dots=p_n(0)=0$ gives rise to $p_3(t)=\dots=p_n(t)=0$ for all $t\in[0,T)$. This implies that the total momentum $P\colon[0,T)\to\R^d$ defined in~\eqref{eq:totalmom} is simply given by
    \begin{displaymath}
        P(t)=p_1(t)+p_2(t).
    \end{displaymath}
    Since we further have $P(0)=p_1(0)+p_2(0)=0$ by assumption, the conservation of total momentum provided by Lemma~\ref{lem:conservationofmomentum} yields $p_1(t)=-p_2(t)$ for all $t\in[0,T)$.

    As a result of~\eqref{2landmark1dsys} and $p_1(t)=-p_2(t)$, the second claim follows directly from
    \begin{displaymath}
        \frac{\db}{\db t}\left(x_1(t)+x_2(t)\right)
        =\kernl(0)p_1-\kernl(\|x_1-x_2\|)p_1+\kernl(\|x_1-x_2\|)p_1-\kernl(0)p_1=0
    \end{displaymath}
    as well as the assumption that $x_1(0)+x_2(0)=0$.
\end{proof}

We now establish Proposition~\ref{propn:geoincomplete} by essentially showing that if we single out two landmarks and shoot them together, then this gives rise to a geodesic which cannot be extended to all times provided that, for some $a>0$ and hence for all $a>0$,
\begin{displaymath}
    \int_0^a \frac{\db r}{\sqrt{\kernl(0)-\kernl(r)}}<\infty.
\end{displaymath}

\begin{proof}[Proof of Proposition~\ref{propn:geoincomplete}]
    We take a maximal solution $(\lm,\mom)\colon [0,T)\to T^\star M$ for $T>0$ to the system consisting of Hamilton's equations~\eqref{eq:hamx} and \eqref{eq:hamp} subject to the initial condition $p_1(0)=-p_2(0)$, $p_3(0)=\dots=p_n(0)=0$ and $x_1(0)=-x_2(0)$. Let $u\colon [0,T)\to\R^d$ and $q\colon [0,T)\to\R^d$ be defined by
    \begin{displaymath}
        u(t)=x_1(t)-x_2(t)\quad\text{and}\quad q(t)=p_1(t).
    \end{displaymath}
    According to Lemma~\ref{lem:shooting}, we additionally have that $q(t)=-p_2(t).$
    From Hamilton's equations~\eqref{eq:hamx} and \eqref{eq:hamp}, or alternatively directly from~\eqref{2landmark1dsys}, we obtain that $u$ and $q$ satisfy
    \begin{align}
        \frac{\db u}{\db t}\label{eq:reducedhamu}
        &=2\left(\kernl(0)-\kernl(\|u\|)\right)q,\\
        \frac{\db q}{\db t}\label{eq:reducedhamq}
        &=\kernl'(\|u\|)\frac{u}{\|u\|}\|q\|^2.
    \end{align}
    We observe that the system of equations~\eqref{eq:reducedhamu} and \eqref{eq:reducedhamq} is a system of equations for each component of the vector-valued functions $u$ and $q$ separately, with the same coefficients depending on $\|u\|$ and $\|q\|$ only. It follows that if $u$ and $q$ both have the same components initially zero, then those components will remain zero for all $t\in[0,T)$.

    In particular, if all but the first component of $u$ and $q$, respectively, are initially zero, then $u$ and $q$ will stay on this axis in $\R^d$ for all $t\in[0,T)$. In such a situation, we may also treat the functions $u$ and $q$ as real-valued, keeping track of their non-zero component only, and the equations~\eqref{eq:reducedhamu} and \eqref{eq:reducedhamq} as real-valued equations rather than vector-valued ones.

    Let us now assume that we indeed consider such a solution with $u(0)=a>0$ and $q(0)<0$. The equation~\eqref{eq:reducedhamu} then simplifies to the real-valued equation
    \begin{equation}\label{eq:ODE4u}
        \frac{\db u}{\db t} =2\left(\kernl(0)-\kernl(u)\right)q.
    \end{equation}
    We observe from~\eqref{eq:ODE4u} that, because $\kernl$ is strictly decreasing on $[0,\infty)$, the choice $q(0)<0$ ensures that $u$ is also decreasing.

    Since the Hamiltonian $\ham\colon T^\star M\to\R$ given by~\eqref{eq:hamiltonian} is time-independent, it is conserved along any solution to the system of Hamilton's equations~\eqref{eq:hamx} and \eqref{eq:hamp}. Therefore, we have, for all $t\in[0,T)$ and for some constant $E>0$ depending on the initial condition,
    \begin{displaymath}
        E=\left(\kernl(0)-\kernl(u(t))\right) q(t)^2.
    \end{displaymath}
    Keeping in mind $q(0)<0$ and using the above energy conservation law to eliminate $q$ from~\eqref{eq:ODE4u}, we obtain
    \begin{equation}\label{eq:separable}
        \frac{\db u}{\db t}=-2\sqrt{E}\sqrt{\kernl(0)-\kernl(u)}.
    \end{equation}
    The only possible equilibria of this autonomous differential equation are solutions to the equation $\kernl(0)-\kernl(u)=0$, that is, due to $\kernl$ being strictly decreasing, at $u=0$. The separable differential equation~\eqref{eq:separable} can then be solved to show that the function $u$ decreases from $u(0)=a>0$ to zero in time 
    \begin{displaymath}
        T=\int_0^a\frac{\db u}{2\sqrt{E}\sqrt{\kernl(0)-\kernl(u)}}.
    \end{displaymath}
    Thus, if the above integral is finite, which is exactly given by the condition~\eqref{eq:kernlintegrabl}, the chosen initial condition gives rise to a geodesic on $(M,g)$ for which the two landmarks $x_1$ and $x_2$ meet in finite time, that is, this geodesic cannot be extended to all times.
\end{proof}

\begin{remark}\label{rem:momentumblowup}
    If we choose a solution $(\lm,\mom)\colon [0,T)\to T^\star M$ to the system of Hamilton's equations~\eqref{eq:hamx} and \eqref{eq:hamp} with initial condition as in the proof of Proposition~\ref{propn:geoincomplete} then, due to the conservation of the Hamiltonian along the solution, we know that, for all $t\in[0,T)$ and for some constant $E>0$,
    \begin{displaymath}
        E=(\kernl(0)-\kernl(\|u(t)\|))\|q(t)\|^2.
    \end{displaymath}
    Hence, if we have a solution with $\lim_{t\uparrow T} \|u(t)\|=0$ then
    \begin{displaymath}
        \lim_{t\uparrow T}\|q(t)\|=\infty,
    \end{displaymath}
    that is, the linear momentum blows up as landmarks approach collision.
\end{remark}

The following example illustrates the construction used in the proof of Proposition~\ref{propn:geoincomplete} and, thereby, demonstrates for the special case of the Laplacian kernel the phenomenon of two landmarks moving along a geodesic and colliding in finite time. This is discussed in more detail by Camassa, Kuang and Lee in~\cite{camassa2016solitary}.

\begin{example}
    We consider $n=2$ landmarks in $\R$, that is, $d=1$, as well as the kernel corresponding to $\kernl\colon [0,\infty)\to\R$ given by $\kernl(r)=\exp(-r)$. Fix $b,T\in(0,\infty)$ and define $(\lm,\mom)\colon [0,T)\to T^\star M$ by, for $t\in[0,T)$,
    \begin{align*}
        x_1(t)&=\log\cosh\left(b(T-t)\right),
        & p_1(t) &=-\frac{b}{\tanh\left(b(T-t)\right)},\\
        x_2(t)&=-\log\cosh\left(b(T-t)\right),
        & p_2(t) &=\frac{b}{\tanh\left(b(T-t)\right)}.
    \end{align*}
    Observe that $x_1(t)=-x_2(t)>0$ and $p_1(t)=-p_2(t)<0$ for all $t\in[0,T)$. Moreover, one can verify that $(\lm,\mom)\colon [0,T)\to T^\star M$ is a solution to the system of Hamilton's equations~\eqref{2landmark1dsys}. This solution satisfies
    \begin{displaymath}
        \lim_{t\uparrow T} x_1(t) = 0 = \lim_{t\uparrow T} x_2(t),
    \end{displaymath}
    that is, the two landmarks move along a geodesic and collide in time $T<\infty$. In addition, we see
    \begin{displaymath}
        \lim_{t\uparrow T} p_1(t)=-\infty
        \quad\text{and}\quad
        \lim_{t\uparrow T} p_2(t)=\infty,
    \end{displaymath}
    which illustrates the observation made in Remark~\ref{rem:momentumblowup}.
\end{example}

\section{Geodesically complete landmark space}
\label{sec:geocomplete}

We first establish that, under the assumption that our integrability condition~\eqref{eq:kernlintegrabl} is not satisfied,
any curve which tends to the collision set $C=\R^{nd}\setminus M$ or to Euclidean infinity in finite time has infinite length, where those two cases are treated separately. We then obtain Proposition~\ref{propn:geocomplete} as a consequence of the Hopf--Rinow theorem.

For $i,j\in\{1,\dots,n\}$ fixed with $i\not=j$, let $f_{i}\colon \R^{nd}\to\R$ and $f_{ij}\colon \R^{nd}\to\R$ be the Euclidean distance functions defined by, for $\lm=(x_1,\dots,x_n)\in\R^{nd}$,
\begin{displaymath}
    f_{i}(\lm)=\|x_i\|
    \quad\text{and}\quad
    f_{ij}(\lm)=\|x_i-x_j\|.
\end{displaymath}

The first lemma shows that curves which tend to the collision set $C$ in finite time must have infinite length.

\begin{lemma}\label{lem:nocollision}
    Fix $i,j\in\{1,\dots,n\}$ with $i\not=j$ and suppose that, for all $a>0$,
    \begin{displaymath}
        \int_0^a\frac{\db r}{\sqrt{\kernl(0)-\kernl(r)}}=\infty.
    \end{displaymath}
    If $\gamma\colon [0,T)\to M$ is a piecewise differentiable curve in $M$ such that
    $\lim_{t\uparrow T} f_{ij}(\gamma(t))=0$, then $\gamma$ has infinite length with respect to the metric $g$ on $M$.
\end{lemma}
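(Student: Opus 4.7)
The strategy is to bound the length of $\gamma$ from below by controlling the rate at which the scalar function $f_{ij}$ can decrease, via the Cauchy--Schwarz inequality applied to the $g$-gradient of $f_{ij}$. The crucial computation is that the Riemannian gradient of $f_{ij}$ has a particularly clean norm in terms of the cometric kernel $\kernl$. Using that the cometric takes the form $g^{(k,\alpha)(l,\beta)}(\lm) = \kernl(\|x_k-x_l\|)\delta^{\alpha\beta}$, that the Euclidean partial derivatives $\partial_{x_k^\alpha} f_{ij}$ vanish unless $k\in\{i,j\}$, and that for $k=i$ they equal the unit vector $e = (x_i - x_j)/\|x_i-x_j\|$ while for $k=j$ they equal $-e$, a direct computation using $\|\nabla^g f\|_g^2 = g^{ab}\partial_a f \partial_b f$ gives
\begin{displaymath}
    \|\nabla^g f_{ij}(\lm)\|_g^2 = 2\kernl(0) - 2\kernl(\|x_i - x_j\|) = 2\bigl(\kernl(0) - \kernl(f_{ij}(\lm))\bigr).
\end{displaymath}

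Next, I would apply Cauchy--Schwarz along $\gamma$: at every point of differentiability one has
\begin{displaymath}
    \Bigl|\tfrac{\db}{\db t} f_{ij}(\gamma(t))\Bigr| = \bigl|\langle \nabla^g f_{ij}(\gamma(t)),\dot\gamma(t)\rangle_g\bigr| \leq \|\nabla^g f_{ij}(\gamma(t))\|_g\,\|\dot\gamma(t)\|_g,
\end{displaymath}
so that, setting $r(t) = f_{ij}(\gamma(t))>0$,
\begin{displaymath}
    \|\dot\gamma(t)\|_g \geq \frac{|\dot r(t)|}{\sqrt{2\bigl(\kernl(0)-\kernl(r(t))\bigr)}}.
\end{displaymath}
I would then introduce a primitive of the right-hand integrand, say $G(r) = \int_r^{r_0}\frac{\db s}{\sqrt{2(\kernl(0)-\kernl(s))}}$ for some fixed $r_0>0$, which is well-defined and smooth on $(0,\infty)$ by smoothness of $\kernl$ on $(0,\infty)$. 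The integrability hypothesis forces $G(r)\to +\infty$ as $r\downarrow 0$.

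Integrating the previous bound and using $\int |h'|\geq |\int h'|$, the length of $\gamma$ restricted to $[0,t]$ satisfies
\begin{displaymath}
    L(\gamma|_{[0,t]}) \geq \int_0^t \Bigl|\tfrac{\db}{\db \tau} G(r(\tau))\Bigr|\db\tau \geq \bigl|G(r(t)) - G(r(0))\bigr|.
\end{displaymath}
Since $r(t)\to 0$ as $t\uparrow T$ and $G(r(t))\to\infty$, letting $t\uparrow T$ yields $L(\gamma)=\infty$. To accommodate piecewise differentiability, I would simply apply this on each smooth piece and sum, or equivalently observe that $\tau\mapsto G(r(\tau))$ is absolutely continuous on each subinterval, making the fundamental theorem of calculus applicable.

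The main obstacle is conceptual rather than technical: recognizing that the right quantity to differentiate along $\gamma$ is $f_{ij}$ itself, and that its $g$-gradient norm carries exactly the factor that matches the integrability criterion. Once the gradient formula $\|\nabla^g f_{ij}\|_g^2 = 2(\kernl(0)-\kernl(f_{ij}))$ is in hand, everything reduces to the standard change-of-variables argument comparing arclength to a pullback of a divergent improper integral. No additional hypothesis on how $\gamma$ approaches the collision set (e.g., monotonicity of $r(t)$) is needed, because the inequality $\int|h'|\geq |\int h'|$ tolerates oscillation.
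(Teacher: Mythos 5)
Your proof is correct and takes essentially the same approach as the paper: both compute $g^{-1}(\db f_{ij},\db f_{ij})=2\left(\kernl(0)-\kernl(f_{ij})\right)$ and apply Cauchy--Schwarz to bound $\sqrt{g(\dot\gamma,\dot\gamma)}$ from below by $\left|\frac{\db}{\db t}(f_{ij}\circ\gamma)\right|/\sqrt{2\left(\kernl(0)-\kernl(f_{ij}\circ\gamma)\right)}$ before integrating. Your explicit primitive $G$ is just a slightly more careful rendering of the paper's change-of-variables step, properly accommodating possible oscillation of $f_{ij}\circ\gamma$.
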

\begin{proof}
    Since $x_i\not=x_j$ for any $\lm\in M$, we have $f_{ij}(\gamma(t))>0$ for all $t\in[0,T)$.
    We then observe that the one-form $\db f_{ij}$ is given by
    \begin{displaymath}
        \left(\db f_{ij}\right)(\lm)
        =\frac{\langle x_i-x_j,\db x_i-\db x_j\rangle}{f_{ij}(\lm)}.
    \end{displaymath}
    In particular, all components of $\db f_{ij}$, except for
    \begin{displaymath}
        \left(\db f_{ij}\right)^i(\lm)
        =\frac{\left(x_i-x_j\right)^\top}{f_{ij}(\lm)}
        \quad\text{and}\quad
        \left(\db f_{ij}\right)^j(\lm)
        =\frac{\left(x_j-x_i\right)^\top}{f_{ij}(\lm)},
    \end{displaymath}
    are zero. This together with the cometric $g^{-1}$ of the metric $g$ being given by~\eqref{eq:defnginv} implies that
    \begin{displaymath}
        g^{-1}(\lm)(\db f_{ij},\db f_{ij})
        =\frac{1}{\left(f_{ij}(\lm)\right)^2}
        \left(\kernl(0)\|x_i-x_j\|^2-2\kernl(\|x_i-x_j\|)\|x_i-x_j\|^2+\kernl(0)\|x_j-x_i\|^2\right),
    \end{displaymath}
    which reduces to
    \begin{equation}\label{eq:ginvondf}
        g^{-1}(\lm)(\db f_{ij},\db f_{ij})
        =2\left(\kernl(0)-\kernl\left(f_{ij}(\lm)\right)\right).
    \end{equation}
    We now use the notation $\dot{\gamma}=\frac{\db\gamma}{\db t}$. Since we can write, for a suitable positive-definite symmetric matrix $G$ and with respect to the Euclidean inner product on $\R^{nd}$,
    \begin{displaymath}
        g\left(\dot{\gamma},\dot{\gamma}\right)
        =\left\langle\dot{\gamma}, G \dot{\gamma}\right\rangle
        \quad\text{as well as}\quad
        g^{-1}(\db f_{ij},\db f_{ij})
        =\left\langle\left(\db f_{ij}\right)^\top,G^{-1}\left(\db f_{ij}\right)^\top\right\rangle,
    \end{displaymath}
    the Cauchy--Schwarz inequality yields
    \begin{displaymath}
        g\left(\dot{\gamma},\dot{\gamma}\right)
        g^{-1}(\db f_{ij},\db f_{ij})
        =\left\|\sqrt{G}\dot{\gamma}\right\|^2
        \left\|\sqrt{G^{-1}}\left(\db f_{ij}\right)^\top\right\|^2\\
        \geq \left\langle\sqrt{G}\dot{\gamma}, \sqrt{G^{-1}}\left(\db f_{ij}\right)^\top\right\rangle^2
        =\left\langle\dot{\gamma},\left(\db f_{ij}\right)^\top\right\rangle^2.
    \end{displaymath}
    By~\eqref{eq:ginvondf} and due to the action of the one-form $\db f_{ij}$ on vectors, it follows that, for all $t\in[0,T)$,
    \begin{displaymath}
        \sqrt{g\left(\dot{\gamma}(t),\dot{\gamma}(t)\right)}
        \geq \frac{\left|\frac{\db}{\db t} \left(f_{ij}\circ \gamma\right)(t)\right|}{\sqrt{2\left(\kernl(0)-\kernl\left(f_{ij}\left(\gamma(t)\right)\right)\right)}}.
    \end{displaymath}
    Using the change of variables $r=f_{ij}(\gamma(t))$ and the assumption that $f_{ij}\circ \gamma\colon [0,T)\to \R$ goes from some positive value $a=f_{ij}(\gamma(t_0))$ for $t_0\in[0,T)$ to $\lim_{t\uparrow T} f_{ij}(\gamma(t))=0$, we obtain
    \begin{displaymath}
        \int_0^T \sqrt{g\left(\dot{\gamma}(t),\dot{\gamma}(t)\right)}\dd t
        \geq \int_0^T \frac{\left|\frac{\db}{\db t} \left(f_{ij}\circ \gamma\right)(t)\right|}{\sqrt{2\left(\kernl(0)-\kernl\left(f_{ij}\left(\gamma(t)\right)\right)\right)}}\dd t
        \geq \int_0^a \frac{\db r}{{\sqrt{2\left(\kernl(0)-\kernl(r)\right)}}}.
    \end{displaymath}
    Thus, as the latter improper integral does not converge by assumption, the curve $\gamma\colon[0,T)\to M$ has infinite length.
\end{proof}

The next lemma shows that also any curve which tends to Euclidean infinity in finite time must have infinite length. We highlight that this result is true regardless of the choice of smooth kernel $\kernl\colon[0,\infty)\to\R$ and, in particular, that it is independent of our integrability condition.

\begin{lemma}\label{lem:norunningaway}
    Fix $i\in\{1,\dots,n\}$.
    If $\gamma\colon[0,T)\to M$ is a piecewise differentiable curve in $M$ such that $\lim_{t\uparrow T} f_i(\gamma(t))=\infty$, then $\gamma$ has infinite length with respect to the metric $g$ on $M.$
\end{lemma}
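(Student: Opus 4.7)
The plan is to mirror the strategy of Lemma~\ref{lem:nocollision}, but replacing the pairwise distance function $f_{ij}$ by the single-landmark distance function $f_i$. First I would compute the one-form $\db f_i$: all of its components vanish except for $(\db f_i)^i(\lm) = x_i^\top/\|x_i\|$, which is well-defined on the set where $f_i(\lm) > 0$. Since the cometric is block-diagonal with $g^{ii}(\lm) = \kernl(0)\, I_d$, this immediately gives the key identity
\begin{displaymath}
    g^{-1}(\lm)(\db f_i, \db f_i) = \kernl(0) \frac{\|x_i\|^2}{\|x_i\|^2} = \kernl(0).
\end{displaymath}
The crucial point is that this quantity is a \emph{constant}, independent of $\lm$, which is why no integrability hypothesis on $\kernl$ is needed here.

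Next I would reuse the Cauchy--Schwarz step from the proof of Lemma~\ref{lem:nocollision}: writing $g(\dot\gamma,\dot\gamma) = \langle \dot\gamma, G\dot\gamma\rangle$ and $g^{-1}(\db f_i, \db f_i) = \langle (\db f_i)^\top, G^{-1}(\db f_i)^\top\rangle$, Cauchy--Schwarz yields
\begin{displaymath}
    \sqrt{g(\dot\gamma(t),\dot\gamma(t))} \;\geq\; \frac{\bigl|\tfrac{\db}{\db t}(f_i\circ\gamma)(t)\bigr|}{\sqrt{\kernl(0)}}.
\end{displaymath}
Assuming $\gamma$ is defined (or at least piecewise differentiable) on $[0,T)$, I may restrict attention to a sub-interval $[t_0, T)$ where $f_i(\gamma(t)) > 0$, which is possible once $f_i(\gamma(t))$ is sufficiently large.

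Finally, integrating from $t_0$ to $T$ gives
\begin{displaymath}
    \int_{t_0}^T \sqrt{g(\dot\gamma(t),\dot\gamma(t))}\dd t \;\geq\; \frac{1}{\sqrt{\kernl(0)}} \int_{t_0}^T \left|\frac{\db}{\db t}(f_i\circ\gamma)(t)\right|\dd t \;\geq\; \frac{1}{\sqrt{\kernl(0)}}\, \bigl|\lim_{t\uparrow T} f_i(\gamma(t)) - f_i(\gamma(t_0))\bigr|,
\end{displaymath}
since the total variation of a function bounds the difference of its endpoint values. As $\lim_{t\uparrow T} f_i(\gamma(t)) = \infty$ by hypothesis, the right-hand side is infinite, so the length of $\gamma$ is infinite.

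There is no genuine obstacle here; the argument is a direct adaptation of Lemma~\ref{lem:nocollision}, and is in fact simpler because $g^{-1}(\db f_i,\db f_i)$ is constant rather than vanishing near the singular set. This is the technical reason why, as remarked in the text, no criterion is needed to exclude escape to Euclidean infinity in finite time: the Riemannian length on $M$ always dominates (a constant multiple of) the Euclidean displacement of each individual landmark.
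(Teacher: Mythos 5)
Your proposal is correct and follows essentially the same route as the paper: compute $(\db f_i)$, observe $g^{-1}(\db f_i,\db f_i)=\kernl(0)$ is constant, apply the same Cauchy--Schwarz step as in Lemma~\ref{lem:nocollision}, and integrate to bound the length from below by the (infinite) Euclidean displacement of $x_i$ divided by $\sqrt{\kernl(0)}$. The only cosmetic difference is that you invoke the total-variation bound directly where the paper phrases the last step as a change of variables to $\int_a^\infty \db r/\sqrt{\kernl(0)}$.
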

\begin{proof}
    Away from $\lm\in M$ with $x_i=0$, we have
    \begin{displaymath}
        (\db f_i)(\lm)=\frac{\langle x_i,\db x_i\rangle}{f_i(\lm)},
    \end{displaymath}
    which gives rise to
    \begin{displaymath}
        g^{-1}(\lm)(\db f_i,\db f_i)
        =\frac{1}{\left(f_i(\lm)\right)^2}\left(\kernl(0)\|x_i\|^2\right)
        =\kernl(0).
    \end{displaymath}
    By proceeding as in the proof of Lemma~\ref{lem:nocollision}, the Cauchy--Schwarz inequality further shows that
    \begin{displaymath}
        g\left(\dot{\gamma},\dot{\gamma}\right)
        g^{-1}(\db f_{i},\db f_{i})
        \geq\left\langle\dot{\gamma},\left(\db f_{i}\right)^\top\right\rangle^2.
    \end{displaymath}
    Since $f_i\circ\gamma\colon[0,T)\to\R$ has to eventually go from some positive yet finite value $a=f_i(\gamma(t_0))$ for $t_0\in[0,T)$ to $\lim_{t\uparrow T} f_i(\gamma(t))=\infty$ without visiting zero, we then conclude that
    \begin{displaymath}
        \int_0^T \sqrt{g\left(\dot{\gamma}(t),\dot{\gamma}(t)\right)}\dd t
        \geq \int_0^T \frac{\left|\frac{\db}{\db t} \left(f_{i}\circ \gamma\right)(t)\right|}{\sqrt{\kernl(0)}}\dd t
        \geq \int_a^\infty \frac{\db r}{{\sqrt{\kernl(0)}}}=\infty,
    \end{displaymath}
    and the claimed result follows.
\end{proof}

\begin{proof}[Proof of Proposition~\ref{propn:geocomplete}]
    Since the scalar function $\kernl\colon[0,\infty)\to\R$ that defines the cometric kernel is assumed to be smooth on $(0,\infty)$, the Hopf--Rinow theorem implies that the Riemannian manifold $(M,g)$ is geodesically complete if and only if it is metrically complete. However, as a consequence of~\cite[Lemma~6]{discretesobolev}, metric completeness follows after showing that every curve which tends to the collision set $C$ or to Euclidean infinity in finite time must have infinite length, which is ensured by Lemma~\ref{lem:nocollision} and Lemma~\ref{lem:norunningaway}.
\end{proof}

\section{Geodesically complete and stochastically incomplete manifold}\label{sec:geostodiff}

We proceed by proving Theorem~\ref{thm:geostodiff} which provides a family of landmark spaces of exactly two landmarks that are geodesically complete but stochastically incomplete, where we use Theorem~\ref{thm:main} for the characterization of geodesic completeness and the approach developed in~\cite{LongtimeExistenceBrownian2024} to characterize stochastic incompleteness.

The landmark space of exactly two landmarks in any $\R^d$ is stochastically incomplete if the distance process $(r_t)_{t\in[0,\zeta)}$ between the two landmarks in Riemannian landmark Brownian motion has positive probability of hitting zero in finite time, which corresponds to the two landmarks colliding in finite time. As established in~\cite{LongtimeExistenceBrownian2024}, the stochastic process $(r_t)_{t\in[0,\zeta)}$ is the unique strong solution to the It\^o stochastic differential equation
\begin{equation}\label{eq:SDE}
  \db r_t =\sigma(r_t)\dd B_t+b(r_t)\dd t,
\end{equation}
for $(B_t)_{t\geq 0}$ a one-dimensional standard Brownian motion, $\sigma\colon [0,\infty)\to\R$ as well as $b\colon[0,\infty)\to\R$ given by
\begin{displaymath}
    \sigma(r)=\sqrt{2(\kernl(0)-\kernl(r))}
    \quad\text{and}\quad
    b(r)=\frac{((d-1)\kernl(r)-\kernl(0))\kernl'(r)}{\kernl(0)+\kernl(r)},
\end{displaymath}
and subject to $r_0>0$. 
Its behavior near zero can be studied, for example, by following the classification of singular points obtained by Cherny and Engelbert in~\cite{SSDE}. We specifically make use of the result~\cite[Theorem~2.13]{SSDE} stated below, where $\rho\colon (0,a]\to\R$ and $s\colon (0,a]\to\R$ are defined by
\begin{displaymath}
    \rho(r)=\exp\left(\int_r^a\frac{2b(y)}{\left(\sigma(y)\right)^2}\dd y\right)
    \quad\text{and}\quad
    s(r)=\int_0^r\rho(y)\dd y,
\end{displaymath}
respectively.
\begin{theorem}[Cherny--Engelbert~\cite{SSDE}]\label{thm:CE213}
    Suppose that, for some $a>0$,
    \begin{displaymath}
        \int_0^a\rho(r)\dd r < \infty,\quad
        \int_0^a\frac{1+|b(r)|}{\rho(r)\left(\sigma(r)\right)^2}\dd r=\infty
        \quad\text{and}\quad
        \int_0^a\frac{1+|b(r)|}{\rho(r)\left(\sigma(r)\right)^2}s(r)\dd r<\infty,
    \end{displaymath}
    and set $T_{0,a}=\inf\{t\geq 0:r_t=0\text{ or }r_t=a\}$.
    Then, if $r_0\in [0,a]$, there exists a unique solution $(r_t)_{t\in[0,T_{0,a}]}$ to~\eqref{eq:SDE}. Moreover, we have $\mathbb{E}[T_{0,a}]<\infty$ and $\mathbb{P}(r_{T_{0,a}}=0)>0$, that is, with positive probability zero is hit in finite time.
\end{theorem}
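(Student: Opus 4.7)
The plan is to prove this via the classical scale function and speed measure machinery for one-dimensional diffusions, identifying $s$ as the scale function of the generator $L=\tfrac{1}{2}\sigma^2\partial_r^2+b\partial_r$. A direct computation confirms $s'=\rho$ and $Ls=0$ on $(0,a)$, so the scale function converts~\eqref{eq:SDE} into martingale form: by It\^o's formula the process $Y_t:=s(r_t)$ satisfies $\db Y_t=\rho(r_t)\sigma(r_t)\,\db B_t$ on $[0,T_{0,a})$ and is therefore a bounded local martingale taking values in $[0,s(a)]$. The first hypothesis $\int_0^a\rho\,\db r<\infty$ is precisely what makes $s(a)<\infty$ so that this embedding into a compact interval is legitimate.

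Existence and uniqueness up to $T_{0,a}$ then follow from exhausting the interior: since $\sigma$ and $b$ are smooth on $(0,\infty)$, standard strong existence and uniqueness yield solutions up to the exit time of any subinterval $[\eps,a-\eps]$, and the bounded-martingale description of $Y$ ensures that these exit times have an almost surely finite limit $T_{0,a}$. By the Dambis--Dubins--Schwarz theorem, $Y$ is a time-changed Brownian motion killed on exit from $[0,s(a)]$, so Brownian hitting properties give $T_{0,a}<\infty$ almost surely. Optional stopping applied to the bounded martingale $Y$ then yields the positive hitting probability $\mathbb{P}(r_{T_{0,a}}=0)=1-s(r_0)/s(a)>0$ directly.

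To upgrade from $T_{0,a}<\infty$ a.s. to $\mathbb{E}[T_{0,a}]<\infty$, the plan is to construct a bounded nonnegative solution $u$ on $[0,a]$ of $Lu=-1$ with $u(0)=u(a)=0$, using the explicit Green's function built from the scale function $s$ and the speed measure $m(\db r)=2(\rho(r)\sigma(r)^2)^{-1}\,\db r$; Dynkin's formula then gives $\mathbb{E}_{r_0}[T_{0,a}]=u(r_0)<\infty$. The third hypothesis, finiteness of $\int_0^a(1+|b|)\rho^{-1}\sigma^{-2}s(r)\,\db r$, is precisely the Feller-type integrability required to keep the candidate $u$ bounded, while the divergence in the second hypothesis prevents a competing finite contribution that would admit a nontrivial entrance-type modification. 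Verifying that the constructed $u$ is indeed the required candidate reduces to a routine double-integral computation.

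The principal technical obstacle is the singular behaviour of $\sigma$ and $b$ at the endpoint $r=0$, where classical Lipschitz theory fails and one must rely on the Feller classification to pin down the correct boundary behaviour. The role of the second condition, divergence of $\int_0^a(1+|b|)\rho^{-1}\sigma^{-2}\,\db r$, is to rule out turning $0$ into an entrance or reflecting boundary, so that absorption at $0$ is the \emph{unique} consistent extension of the martingale problem across the singularity. This is where the Cherny--Engelbert analysis is strictly more delicate than the regular-coefficient case, and where the careful localization through the scale function, rather than any direct estimate on $\sigma$ and $b$, does the real work.
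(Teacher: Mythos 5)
This statement is not proved in the paper at all: it is imported verbatim as Theorem~2.13 of Cherny and Engelbert~\cite{SSDE} and used as a black box in the proof of Theorem~\ref{thm:geostodiff}, so there is no internal argument to compare yours against. On its own terms, your outline follows the classical scale-function/speed-measure route, and the architecture is essentially right: $s'=\rho$ and $Ls=0$ are correct, the first hypothesis is exactly $s(a)<\infty$, the third hypothesis is exactly $\int_0^a s\,\db m<\infty$ for the speed measure $m(\db r)=2\bigl(\rho(r)\sigma(r)^2\bigr)^{-1}\db r$ (the $|b|$ in the integrand is harmless extra weight), and the Green's-function solution of $Lu=-1$ with Dynkin's formula does deliver $\mathbb{E}[T_{0,a}]<\infty$.

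Two points need repair. First, there is an ordering problem: you invoke optional stopping to get $\mathbb{P}(r_{T_{0,a}}=0)=1-s(r_0)/s(a)$ before you have established that $r_t$ actually reaches $\{0,a\}$, and the Dambis--Dubins--Schwarz step does not by itself give $T_{0,a}<\infty$ almost surely --- the time change can freeze, and a bounded local martingale converges without any guarantee that its limit lies on the boundary. You must either argue that $Y$ cannot converge to an interior point (regularity of the diffusion, using $\sigma>0$ on $(0,a)$, plus non-degeneracy of the accumulated quadratic variation) or simply run the $\mathbb{E}[T_{0,a}]<\infty$ computation first and let it subsume almost-sure finiteness; as written the hitting-probability step is circular. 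Second, your argument relies on strong existence and uniqueness from smooth coefficients on compact subintervals of $(0,a)$; that suffices for the application in this paper, where $\sigma$ and $b$ are smooth away from $0$, but it does not reproduce the theorem in the generality Cherny and Engelbert prove it (merely measurable coefficients), and it sidesteps the genuinely delicate content of their second condition, which governs whether a solution can be continued past $0$ and whether uniqueness holds up to $T_{0,a}$ rather than being mere ``Feller bookkeeping.'' So the proposal is a serviceable proof of the statement as it is actually used here, but not of the cited theorem in full strength.
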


\begin{proof}[Proof of Theorem~\ref{thm:geostodiff}]
    Using the change of variables $u=1-\log(r)$, we first compute, for $r_0\in(0,1)$ and $\beta\in\R$,
    \begin{displaymath}
        \int_{r_0}^1\frac{\dd r}{r\left(1-\log(r)\right)^\beta}
        =\int_1^{1-\log(r_0)}u^{-\beta} \dd u
        =
        \begin{cases}
            \log(1-\log(r_0))& \text{if }\beta=1,\\
            \frac{1}{1-\beta}\left(\left(1-\log(r_0)\right)^{1-\beta}-1\right)& \text{if }\beta\not= 1,
        \end{cases}
    \end{displaymath}
    which implies as $r_0\to 0$ that
    \begin{displaymath}
        \int_0^1\frac{\dd r}{r\left(1-\log(r)\right)^\beta}
        =
        \begin{cases}
            \infty& \text{if }\beta\leq 1,\\
            \frac{1}{\beta -1}& \text{if }\beta> 1.
        \end{cases}
    \end{displaymath}
    Since $\kernl\colon[0,\infty)\to\R$ is chosen such that, for some constant $c\in(1,2]$ and for $r\in[0,\frac{1}{2}]$,
    \begin{displaymath}
        \kernl(r)=1-r^2(1-\log(r))^{c},
    \end{displaymath}
    it particularly follows that, for $a>0$,
    \begin{equation}\label{eq:integrals}
        \int_0^a\frac{\dd r}{\sqrt{\kernl(0)-\kernl(r)}}=\infty
        \quad\text{and}\quad
        \int_0^a\frac{r}{\kernl(0)-\kernl(r)}\dd r <\infty.
    \end{equation}
    
    Applying Theorem~\ref{thm:main}, we deduce from~\eqref{eq:integrals} that the Riemannian landmark manifold $(M,g)$ associated with this choice of $\kernl$ is geodesically complete for any number of landmarks in any $\R^d$.
    
    It remains to study its stochastic completeness or stochastic incompleteness, for which we have to restrict our attention to exactly two landmarks to be able to apply the approach developed in~\cite{LongtimeExistenceBrownian2024}. As derived in~\cite{LongtimeExistenceBrownian2024}, we have
    \begin{displaymath}
        \rho(r)=
        \left(\frac{\kernl(0)-\kernl(a)}{\kernl(0)-\kernl(r)}\right)^{1-d/2}\left(\frac{\kernl(0)+\kernl(a)}{\kernl(0)+\kernl(r)}\right)^{-d/2},
    \end{displaymath}
    which shows that $\int_0^a\rho(r)\dd r<\infty$ for $d=2$.
    
    In the following, we write $f(r)\sim g(r)$ as $r\downarrow 0$ if there exists a non-zero constant $C\in\R$ such that, as $r\downarrow 0$,
    \begin{displaymath}
        f(r)=g(r)(C+o(1))\;.
    \end{displaymath}
    We then have, as $r\downarrow 0$ and using $d=2$,
    \begin{displaymath}
        \frac{1+|b(r)|}{\rho(r)\left(\sigma(r)\right)^2}\sim
        \frac{1}{\kernl(0)-\kernl(r)},
    \end{displaymath}
    so that we can use $\left(\kernl(0)-\kernl(r)\right)^{-1}\geq\left(\kernl(0)-\kernl(r)\right)^{-1/2}$ to obtain
    \begin{displaymath}
        \int_0^a\frac{1+|b(r)|}{\rho(r)\left(\sigma(r)\right)^2}\dd r=\infty.
    \end{displaymath}
    We further observe that, for $d=2$,
    \begin{displaymath}
        \frac{1+|b(r)|}{\rho(r)\left(\sigma(r)\right)^2}s(r)\sim
        \frac{r}{\kernl(0)-\kernl(r)},
    \end{displaymath}
    which with our particular choice of scalar function $\kernl$, as a result of~\eqref{eq:integrals}, gives rise to
    \begin{displaymath}
        \int_0^a\frac{1+|b(r)|}{\rho(r)\left(\sigma(r)\right)^2}s(r)\dd r<\infty.
    \end{displaymath}
    According to Theorem~\ref{thm:CE213}, that is~\cite[Theorem~2.13]{SSDE}, it finally follows that the stochastic process $(r_t)_{t\in[0,T_{0,a}]}$ hits zero in finite time with positive probability. Thus, the landmark space $(M,g)$ of exactly two landmarks associated with $\kernl$ is stochastically incomplete because the two landmarks collide in finite time with positive probability.
\end{proof}

\section{Two-landmark system with non-zero angular momentum}
\label{sec:twolandmarksystem}

In Section~\ref{sec:geoincomplete}, we demonstrate that under the integrability condition~\eqref{eq:kernlintegrabl} particular choices of initial conditions give rise to geodesics that cannot be extended to all times. Notably, the total angular momentum of all these systems is zero. By extending the analysis in the Hamiltonian approach, we establish Theorem~\ref{thm:twolandmarks}, rephrased more precisely as Theorem~\ref{thm:twolandmarksrig} below, that in the two-landmark system non-zero total angular momentum prevents finite-time breakdown for all choices of kernels.

As already observed by McLachlan and Marsland in~\cite{MM}, the two-landmark system is completely integrable. However, the work~\cite{MM} crucially relies on the assumption that the function defined on $\R$ by $x\mapsto \kernl(|x|)$ is twice continuously differentiable at zero, whereas we more generally consider kernels that are continuous at the collision set but not necessarily any better than that. The landmark manifolds with exactly two landmarks have also been studied by Micheli, Michor and Mumford in~\cite{MMM}, but again not for cometric kernels which admit collision of landmarks. Numerical simulations for the two-landmark system have been performed by~Chertock, Du Toit and Marsden in~\cite{chertock2012integration}, and exact solutions for the special case of the Laplacian kernel are derived by Camassa, Kuang and Lee in~\cite{camassa2016solitary}, where collisions in finite time are demonstrated by means of these solutions.

As discussed in Section~\ref{sec:geoincomplete}, for landmark space with exactly two landmarks in any $\R^d$, that is, for $n=2$ and any positive number $d\geq 1$, Hamilton's equations~\eqref{eq:hamx} and \eqref{eq:hamp} reduce to the system~\eqref{2landmark1dsys} of equations.
In terms of $r=\|x_1-x_2\|$ and the center-of-mass coordinates $(u,v,P,Q)$ for $T^\star M$ used in~\cite{MM,MMM} and defined by
\begin{displaymath}
    u=x_1-x_2,\quad
    v=\frac{x_1+x_2}{2},\quad
    P=p_1+p_2\quad\text{and}\quad
    Q=\frac{p_1-p_2}{2},
\end{displaymath}
Hamilton's equations for the two-landmark system write as
\begin{align*}
    \frac{\db u}{\db t}&=2\left(\kernl(0)-\kernl(r)\right)Q, 
    & \frac{\db P}{\db t}
    &=0,\\[0.4em]
    \frac{\db v}{\db t}&=\frac{1}{2}\left(\kernl(0)+\kernl(r)\right)P,    
    & \frac{\db Q}{\db t}
    &=\frac{\kernl'(r)u}{r}\left(\|Q\|^2-\frac{1}{4}\|P\|^2\right).
\end{align*}
Since the total momentum $P$ is conserved along any solution to the system of Hamilton's equations, as has already been observed in Lemma~\ref{lem:conservationofmomentum}, there exists some constant $c\geq 0$ such that $c=\frac{1}{4}\|P\|^2$. Consequently, the equations for $u$ and $Q$ are independent of the equation for $v$, and we then focus on studying the system of equations
\begin{align}
    \label{eq:4u}
    \frac{\db u}{\db t} &=2\left(\kernl(0)-\kernl(r)\right)Q,\\[0.4em]
    \label{eq:4Q}
    \frac{\db Q}{\db t} &=\frac{\kernl'(r)u}{r}\left(\|Q\|^2-c\right).
\end{align}
This system has a further two conservation laws. Firstly, the conservation of the Hamiltonian
\begin{displaymath}
    \ham
    =\frac{1}{2}\kernl(0)\left(\|p_1\|^2+\|p_2\|^2\right)+\kernl(r)\langle p_1,p_2\rangle
    =2c\kernl(0)+\left(\kernl(0)-\kernl(r)\right)\left(\|Q\|^2-c\right)
\end{displaymath}
implies that 
\begin{displaymath}
D=\left(\kernl(0)-\kernl(r)\right)\left(\|Q\|^2-c\right)
\end{displaymath}
is constant. Alternatively, this can be derived directly from the equations~\eqref{eq:4u} and \eqref{eq:4Q}.
Secondly, the angular momentum $u\wedge Q$ of the system described by~\eqref{eq:4u} and \eqref{eq:4Q} is also conserved, which is a consequence of Lemma~\ref{lem:conservationofangmom} and the fact that, for the two-landmark system, additionally the quantity $x_1\wedge p_2+x_2\wedge p_1$ is conserved. More conveniently, making use of~\eqref{eq:4u} as well as \eqref{eq:4Q} and exploiting the antisymmetry of the wedge product, we obtain
\begin{displaymath}
    \frac{\db}{\db t}\left(u \wedge Q\right)
    =\frac{\db u}{\db t}\wedge Q +u\wedge\frac{\db Q}{\db t}
    =2\left(\kernl(0)-\kernl(r)\right)Q\wedge Q
    +u\wedge \left(\frac{\kernl'(r)u}{r}\left(\|Q\|^2-\frac{1}{4}\|P\|^2\right)\right)=0,
\end{displaymath}
which immediately proves the conservation of $u\wedge Q$.

With respect to the Euclidean norm on $\R^{\frac{d(d-1)}{2}}$, we set $\omega=\|u\wedge Q\|$. As a consequence of the following lemma, we have
\begin{equation}\label{eq:identityforomega}
    \omega^2+ \langle u,Q\rangle^2=r^2 \|Q\|^2.
\end{equation}

\begin{lemma}
    For $y,z\in\R^d$, we have
    \begin{displaymath}
        \|y\wedge z\|^2+\langle y,z\rangle^2 =\|y\|^2\|z\|^2.
    \end{displaymath}
\end{lemma}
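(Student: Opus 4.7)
The plan is to prove this identity, which is essentially the classical Lagrange identity, by a direct coordinate computation. I would start from the definition of the wedge product given earlier in the paper, namely that $y\wedge z\in\R^{d(d-1)/2}$ has entries $y_k z_\ell - y_\ell z_k$ for $k<\ell$, so that
\begin{displaymath}
    \|y\wedge z\|^2 = \sum_{k<\ell}\left(y_k z_\ell - y_\ell z_k\right)^2.
\end{displaymath}
Expanding the square inside the sum and using the symmetry $(k,\ell)\leftrightarrow(\ell,k)$ to replace the ordered sum $\sum_{k<\ell}$ by half an unordered sum $\frac{1}{2}\sum_{k\neq\ell}$, I would rewrite the right-hand side as a combination of $\sum_{k,\ell} y_k^2 z_\ell^2$ and $\sum_{k,\ell} y_k y_\ell z_k z_\ell$, with the diagonal $k=\ell$ terms canceling against each other.

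The key observation then is that the two full double sums factor as $\sum_{k,\ell} y_k^2 z_\ell^2 = \|y\|^2\|z\|^2$ and $\sum_{k,\ell} y_k y_\ell z_k z_\ell = \langle y, z\rangle^2$. Putting these together yields
\begin{displaymath}
    \|y\wedge z\|^2 = \|y\|^2\|z\|^2 - \langle y,z\rangle^2,
\end{displaymath}
which rearranges to the claimed identity. I do not expect any real obstacle here; it is a one-line manipulation once the diagonal terms are handled carefully.

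A conceptually cleaner alternative, which I would mention as a remark rather than carry out, is to recognize the identity as the Cauchy--Binet expansion of the Gram determinant of the $d\times 2$ matrix with columns $y$ and $z$: its Gram determinant equals $\|y\|^2\|z\|^2 - \langle y,z\rangle^2$, while Cauchy--Binet expresses it as the sum of squares of all $2\times 2$ minors, which are precisely the components $y_k z_\ell - y_\ell z_k$ of $y\wedge z$. Either route gives the result, so I would go with the direct expansion since it requires no outside machinery and keeps the proof self-contained.
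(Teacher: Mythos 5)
Your proposal is correct and follows essentially the same route as the paper: expand $\sum_{k<\ell}(y_kz_\ell-y_\ell z_k)^2$, pass to the full double sum over $k,\ell$ (with the diagonal terms cancelling), and recognize the two factored sums as $\|y\|^2\|z\|^2$ and $\langle y,z\rangle^2$. The Cauchy--Binet remark is a nice aside but not needed.
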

\begin{proof}
    We obtain
    \begin{align*}
        \|y\wedge z\|^2
        &=\sum_{1\leq k<\ell\leq d}\left(y_kz_\ell-y_\ell z_k\right)^2\\
        &=\sum_{1\leq k<\ell\leq d} y_k^2z_\ell^2 + \sum_{1\leq k<\ell\leq d} y_\ell^2z_k^2
         -2\sum_{1\leq k<\ell\leq d}y_ky_\ell z_kz_\ell\\
        &=\sum_{k,\ell=1}^d \left(y_k^2z_\ell^2-y_ky_\ell z_kz_\ell\right)
        =\|y\|^2\|z\|^2-\langle y,z\rangle^2,
    \end{align*}
    which establishes the claimed result.
\end{proof}

We can now prove Theorem~\ref{thm:twolandmarks}, rephrased more precisely as follows.

\begin{theorem}\label{thm:twolandmarksrig}
    Let $(M,g)$ be the landmark space of exactly two landmarks in $\R^d$ and suppose that $\kernl\colon[0,\infty)\to\R$ is smooth on $(0,\infty)$. Consider a maximal solution $(u,Q)\colon[0,T)\to T^\star \R^d$ to the system of equations~\eqref{eq:4u} and \eqref{eq:4Q} for some $T>0$ and with $\omega=\|u\wedge Q\|\not=0$. Then we are guaranteed that $T=\infty$, that is, finite-time breakdown cannot occur, regardless of the kernel $\kernl$.
\end{theorem}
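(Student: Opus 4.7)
The plan is to reduce the two-landmark system to an effective one-dimensional problem for $r=\|u\|$ using the three conservation laws at hand---the Hamiltonian (giving the conserved constant $D=(\kernl(0)-\kernl(r))(\|Q\|^2-c)$), the squared total linear momentum $c=\|P\|^2/4$, and the norm $\omega=\|u\wedge Q\|$ of the angular momentum---and then to show that a nonzero $\omega$ acts as a centrifugal barrier preventing $r$ from reaching $0$ in finite time, in direct analogy with the Kepler problem.

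Concretely, set $f(r):=\kernl(0)-\kernl(r)$. The $D$-conservation gives $\|Q\|^2=c+D/f(r)$, and the identity~\eqref{eq:identityforomega} then yields $\langle u,Q\rangle^2 = r^2(c+D/f(r))-\omega^2$. Since $r\,\db r/\db t = 2f(r)\langle u,Q\rangle$ by~\eqref{eq:4u}, squaring produces the effective one-dimensional energy equation
\begin{displaymath}
    \left(\frac{\db r}{\db t}\right)^{2} = 4\,f(r)\,g(r), \qquad g(r):=D+f(r)\left(c-\frac{\omega^2}{r^2}\right).
\end{displaymath}
Non-negativity of the left-hand side confines $r(t)$ to the connected component $I\subset(0,\infty)$ of $\{g\ge 0\}$ containing $r(0)$, and on $I$ we have $g(r)\le D+c\kernl(0)$ together with $f(r)\le\kernl(0)$, so $|\db r/\db t|$ is bounded uniformly. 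Hence $r$ grows at most linearly and cannot escape to infinity in finite time. Moreover, because $\|u\|=r$ and $\|Q\|^2=c+D/f(r)$ depend smoothly on $r$ as long as $r>0$, a finite-time breakdown on $[0,T)$ must be accompanied by $r(t)\to 0$ along some sequence $t_n\uparrow T$.

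It therefore remains to exclude collision in finite time when $\omega\neq 0$. If $\inf I>0$, this is immediate. Otherwise $I$ extends down to $0$, and the time required for the one-dimensional orbit to traverse from $r(0)$ to $0$ equals $T_{\rm coll}=\int_0^{r(0)}\db r\,/\,(2\sqrt{f(r)g(r)})$. The condition $g(r)\ge 0$ rearranges to $f(r)\,\omega^2/r^2\le D+c f(r)$; since the right-hand side is bounded near $0$, this forces $f(r)=O(r^2)$, and plugging back gives $g(r)\le D+c f(r)=O(1)$ as well. Therefore $\sqrt{f(r)g(r)}=O(r)$ near $0$, the integrand $1/\sqrt{f(r)g(r)}$ is bounded below by a constant multiple of $1/r$, and $T_{\rm coll}$ diverges logarithmically. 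Hence $r$ stays strictly positive on every finite interval, so $T=\infty$ in all cases.

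The main obstacle I expect is verifying carefully that every failure mode of the full $(u,Q)$-system is captured by the one-dimensional reduction, and handling uniformly the edge cases in the sign of $D$: if $D\le 0$, the constraint $\|Q\|^2=c+D/f(r)\ge 0$ already bounds $r$ below directly, while if $c=0$, then $\omega\neq 0$ automatically forces $D>0$ because $\|Q\|\ge\omega/\|u\|>0$. A satisfying feature of the argument is that the bound $f(r)=O(r^2)$ forced on the orbit by $g\ge 0$ coincides exactly with the borderline of the integrability criterion in Theorem~\ref{thm:main}: nonzero angular momentum effectively restricts the dynamics to precisely those kernels in which collisions take infinite time.
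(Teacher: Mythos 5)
Your argument is correct, and its engine is the same as the paper's: the conserved quantities $D=(\kernl(0)-\kernl(r))(\|Q\|^2-c)$ and $\omega=\|u\wedge Q\|$, fed into the identity $\omega^2+\langle u,Q\rangle^2=r^2\|Q\|^2$, force $\kernl(0)-\kernl(r)=O(r^2)$ along any orbit whose separation approaches zero, and this is the centrifugal barrier. Where you genuinely differ is in how that estimate is converted into non-collision. The paper splits on the sign of $D$: for $D\le 0$ it gets the direct bound $r^2\ge\omega^2/c$ (matching your edge-case remark), and for $D>0$ it notes that $\kernl(0)-\kernl(r)\lesssim r^2$ makes $\int_0^a\db r/\sqrt{\kernl(0)-\kernl(r)}$ diverge and then simply invokes Proposition~\ref{propn:geocomplete}, so the whole manifold is geodesically complete and nothing more is needed. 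You instead derive the explicit radial equation $(\db r/\db t)^2=4f(r)g(r)$ with effective potential $g(r)=D+f(r)(c-\omega^2/r^2)$ and estimate the collision time directly. This is more self-contained --- it does not route through the Hopf--Rinow machinery of Section 4 --- and it yields the no-escape-to-infinity statement for free from the uniform bound on $|\db r/\db t|$, where the paper cites Lemma~\ref{lem:norunningaway}. Two small points of rigor to tidy: the formula $T_{\mathrm{coll}}=\int_0^{r(0)}\db r/(2\sqrt{f(r)g(r)})$ presumes monotone descent, so either write ``at least'' or, more robustly, observe that $\sqrt{f(r)g(r)}=O(r)$ gives $|\db(\log r)/\db t|\le\mathrm{const}$, which bounds $\log r(t)$ below on any finite time interval regardless of oscillation; and when reducing ``finite-time breakdown'' to ``$r\to 0$'' you should state explicitly that $\|Q\|^2=c+D/f(r)$ can only blow up when $f(r)\to 0$, i.e.\ when $r\to 0$, so that no other failure mode of the $(u,Q)$-system is possible. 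Neither issue affects the validity of the argument.
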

\begin{proof}
    As a result of Lemma~\ref{lem:norunningaway}, it suffices to rule out the breakdown $\lim_{t\uparrow T}r(t)=0$ for $T<\infty$, which corresponds to collision of the two landmarks in finite time.

    We recall from the preceding discussion that $D=\left(\kernl(0)-\kernl(r(t))\right)\left(\|Q(t)\|^2-c\right)$ is constant for $t\in[0,T)$. In particular, due to $\kernl$ being a strictly decreasing function on $[0,\infty)$, the sign of $\|Q(t)\|^2-c=-\langle p_1(t),p_2(t)\rangle$ is preserved. We further observe that we cannot have $Q(t)=0$ for any $t\in[0,T)$ as this would contradict the assumption $\omega>0$. We now analyze the two cases $D\leq 0$ and $D>0$ separately. Note that for the corresponding solution on $T^\star M$, the assumption $D\leq 0$ amounts to supposing $\langle p_1(0),p_2(0)\rangle\geq 0$, whereas $D>0$ corresponds to $\langle p_1(0),p_2(0)\rangle<0$.

    \subparagraph{Case 1: $D\leq 0$.} In this case, we have $\|Q(t)\|^2\leq c$ for all $t\in [0,T)$. From the identity~\eqref{eq:identityforomega}, we further deduce that, for all $t\in[0,T)$,
    \begin{displaymath}
        r(t)^2\|Q(t)\|^2\geq \omega^2.
    \end{displaymath}
    Using $c\geq\|Q(t)\|^2> 0$, we obtain, for all $t\in[0,T)$,
    \begin{displaymath}
        r(t)^2\geq \frac{\omega^2}{c}>0.
    \end{displaymath}
    In particular, this excludes the breakdown $\lim_{t\uparrow T}r(t)=0$.

    \subparagraph{Case 2: $D> 0$.} We observe from~\eqref{eq:identityforomega} that in this case, for all $t\in[0,T)$,
    \begin{equation}\label{eq:inequality4r}
        r(t)^2\left(\frac{D}{\kernl(0)-\kernl(r(t))}+c\right)\geq \omega^2.
    \end{equation}
    Suppose we have $\lim_{t\uparrow T} r(t)=0$ for some $T>0$, then taking the limit $t\uparrow T$ in~\eqref{eq:inequality4r} implies that
    \begin{displaymath}
        \lim_{r\downarrow 0}\frac{D r^2}{\kernl(0)-\kernl(r)}\geq \omega^2.
    \end{displaymath}
    In particular, we can choose some $a>0$ such that, for all $r\in(0,a]$,
    \begin{displaymath}
        \frac{Dr^2}{\kernl(0)-\kernl(r)} \ge \frac{\omega^2}{2}.
    \end{displaymath}
    Since $D>0$ and $\omega>0$, it follows that
    \begin{displaymath}
        \int_0^a\frac{\db r}{\sqrt{\kernl(0)-\kernl(r)}}
        \geq \frac{\omega}{\sqrt{2D}}\int_0^a\frac{\db r}{r}=\infty,
    \end{displaymath}
    that is, the kernel $\kernl$ satisfies the conditions of Proposition~\ref{propn:geocomplete}, and we have global existence for this geodesic and, in fact, all geodesics.
\end{proof}

\bibliographystyle{plain}
\bibliography{references}

\begin{thebibliography}{10}

\bibitem{bauerOverviewGeometriesShape2014}
Martin Bauer, Martins Bruveris, and Peter~W. Michor.
\newblock Overview of the geometries of shape spaces and diffeomorphism groups.
\newblock {\em J. Math. Imaging Vision}, 50(1-2):60--97, 2014.

\bibitem{camassa2016solitary}
Roberto Camassa, Dongyang Kuang, and Long Lee.
\newblock Solitary waves and {$N$}-particle algorithms for a class of
  {E}uler-{P}oincar\'e{} equations.
\newblock {\em Stud. Appl. Math.}, 137(4):502--546, 2016.

\bibitem{discretesobolev}
Jonathan Cerqueira, Emmanuel Hartman, Eric Klassen, and Martin Bauer.
\newblock {\it Sobolev Metrics on Spaces of Discrete Regular Curves}.
\newblock arXiv:2409.02351, 4 Sep 2024.

\bibitem{SSDE}
Alexander Cherny and Hans-J\"{u}rgen Engelbert.
\newblock {\em Singular stochastic differential equations}, volume 1858 of {\em
  Lecture Notes in Mathematics}.
\newblock Springer, Berlin, 2005.

\bibitem{chertock2012integration}
Alina Chertock, Philip Du~Toit, and Jerrold~E. Marsden.
\newblock Integration of the {EPD}iff equation by particle methods.
\newblock {\em ESAIM Math. Model. Numer. Anal.}, 46(3):515--534, 2012.

\bibitem{LongtimeExistenceBrownian2024}
Karen Habermann, Philipp Harms, and Stefan Sommer.
\newblock Long-time existence of {B}rownian motion on configurations of two
  landmarks.
\newblock {\em Bull. Lond. Math. Soc.}, 56(5):1658--1679, 2024.

\bibitem{joshiLandmarkMatchingLarge2000}
Sarang~C. Joshi and Michael~I. Miller.
\newblock Landmark matching via large deformation diffeomorphisms.
\newblock {\em IEEE Trans. Image Process.}, 9(8):1357--1370, 2000.

\bibitem{MM}
Robert~I. McLachlan and Stephen Marsland.
\newblock {$N$}-particle dynamics of the {E}uler equations for planar
  diffeomorphisms.
\newblock {\em Dyn. Syst.}, 22(3):269--290, 2007.

\bibitem{micheliDifferentialGeometryLandmark2008}
Mario Micheli.
\newblock {\em The Differential Geometry of Landmark Shape Manifolds: Metrics,
  Geodesics, and Curvature}.
\newblock PhD thesis, Brown University, Providence, USA, 2008.

\bibitem{MMM}
Mario Micheli, Peter~W. Michor, and David Mumford.
\newblock Sectional curvature in terms of the cometric, with applications to
  the {R}iemannian manifolds of landmarks.
\newblock {\em SIAM J. Imaging Sci.}, 5(1):394--433, 2012.

\bibitem{schoenberg}
Isaac~J. Schoenberg.
\newblock Metric spaces and completely monotone functions.
\newblock {\em Ann. of Math. (2)}, 39(4):811--841, 1938.

\bibitem{trouveDiffeomorphismsGroupsPattern1998}
Alain Trouv{\'e}.
\newblock Diffeomorphisms {{Groups}} and {{Pattern Matching}} in {{Image
  Analysis}}.
\newblock {\em Int. J. Comput. Vis.}, 28(3):213--221, 1998.

\bibitem{younesComputableElasticDistances1998}
Laurent Younes.
\newblock Computable elastic distances between shapes.
\newblock {\em SIAM J. Appl. Math.}, 58(2):565--586, 1998.

\bibitem{younesShapesDiffeomorphisms2010}
Laurent Younes.
\newblock {\em Shapes and {D}iffeomorphisms}, volume 171 of {\em Applied
  Mathematical Sciences}.
\newblock Springer, Berlin, 2010.

\bibitem{younesEvolutionsEquationsComputational2009}
Laurent Younes, Felipe Arrate, and Michael~I. Miller.
\newblock Evolutions equations in computational anatomy.
\newblock {\em NeuroImage}, 45(1, Supplement 1):S40--S50, 2009.

\end{thebibliography}

\end{document}